\newtheorem{lemma}{Lemma}[section]
\newtheorem{proposition}[lemma]{Proposition}
\newtheorem{theorem}[lemma]{Theorem}
\newtheorem{definition}[lemma]{Definition}
\newtheorem{conjecture}[lemma]{Conjecture}
\title[An inequality for matrix pressure]{An inequality for the matrix pressure function and applications}
\author{Ian D. Morris}%\thanks{Department of Mathematics, University of Surrey, Guildford GU2 7XH, U.K.}}
\begin{document}

\maketitle
%\address{Department of Mathematics, University of Surrey, Guildford GU2 7XH, U.K.}
%\email{i.morris@surrey.ac.uk}

\begin{abstract}
We prove an \emph{a priori} lower bound for the pressure, or $p$-norm joint spectral radius, of a measure on the set of $d \times d$ real matrices which parallels a result of J. Bochi for the joint spectral radius. We apply this lower bound to give new proofs of the continuity of the affinity dimension of a self-affine set and of the continuity of the singular-value pressure for invertible matrices, both of which had been previously established by D.-J. Feng and P. Shmerkin using multiplicative ergodic theory and the subadditive variational principle.
Unlike the previous proof, our lower bound yields algorithms to rigorously compute the pressure, singular value pressure and affinity dimension of a finite set of matrices to within an \emph{a priori} prescribed accuracy in finitely many computational steps. We additionally deduce a related inequality for the singular value pressure for measures on the set of $2 \times 2$ real matrices, give a precise characterisation of the discontinuities of the singular value pressure function for two-dimensional matrices, and prove a general theorem relating the zero-temperature limit of the matrix pressure to the joint spectral radius.% which extends earlier results of Y. Guivarc'h, \'{E}. Le Page, M. Ogura and C. F. Martin.%\\
%\\
%Key words and phrases:  affinity dimension, joint spectral radius, matrix pressure, $p$-norm spectral radius, $p$-radius, pressure, self-affine set, affinity dimension, thermodynamic formalism, zero-temperature limit.
\end{abstract}
\section{Introduction}
If $A_1,\ldots,A_N$ are $d \times d$ real matrices and $s>0$ a real number, we may define the \emph{(norm) pressure} of $A_1,\ldots,A_N$ to be the quantity
\[\mathbf{M}((A_1,\ldots,A_N),s):=\lim_{n \to \infty} \frac{1}{n}\log \left(\sum_{i_1,\ldots,i_n=1}^N \left\|A_{i_1}\cdots A_{i_n}\right\|^s\right) \in [-\infty,+\infty),\]
the existence of the limit being guaranteed by subadditivity. This quantity has also been studied in the form of the $p$-norm joint spectral radius, or $p$-radius, defined by
\begin{equation}\label{eq:pradius}\varrho_p(A_1,\ldots,A_N):=\lim_{n\to \infty} \left(\frac{1}{N^n}\sum_{i_1,\ldots,i_n=1}^N \left\|A_{i_1}\cdots A_{i_n}\right\|^p\right)^{\frac{1}{np}}=N^{-\frac{1}{p}}e^{\mathbf{M}((A_1,\ldots,A_n),p)/p}\end{equation}
where it is usually assumed that $p \geq 1$. (Here, and in general throughout the paper, we adopt the conventions $\log 0:=-\infty$, $e^{-\infty}:=0$.) The norm pressure and $p$-radius have been extensively investigated for their connections with wavelet analysis \cite{Ji95,LaWa95,Wa96}, the stability of switched linear systems \cite{OgMa13}, and thermodynamic formalism and multifractal analysis \cite{Fe03,Fe09,FeLa02}; in recent years significant attention has been given to the efficient computation of the $p$-radius \cite{JuPr10,JuPr11,OgMa14,OgPrJu15,Pr97}.

In this article we shall also be concerned with a related quantity, the \emph{singular value pressure} of a finite set of matrices. Let $M_d(\mathbb{R})$ denote the vector space of all $d \times d$ real matrices, and let $\sigma_1(A),\ldots,\sigma_d(A)$ denote the singular values of a matrix $A \in M_d(\mathbb{R})$, which are defined to be the non-negative square roots of the eigenvalues of the positive semidefinite matrix $A^*A$, listed in decreasing order with repetition in the case of multiplicity. For each $s>0$ and $A \in M_d(\mathbb{R})$ we define
\[\varphi^s(A):=\left\{\begin{array}{cl}\sigma_1(A)\cdots \sigma_{k}(A)\sigma_{k+1}(A)^{s-k},&k \leq s \leq k+1 \leq d\\
|\det A|^\frac{s}{d},&s \geq d.\end{array}\right.\]
The function $\varphi$ may easily be seen to be upper semi-continuous in $(A,s)$ with discontinuities occuring precisely when $s$ is an integer from $1$ to $d-1$ such that $\sigma_{s+1}(A)=0<\sigma_s(A)$. We have $\varphi^s(AB)\leq \varphi^s(A)\varphi^s(B)$ for all $A,B \in M_d(\mathbb{R})$ and $s>0$, see e.g. \cite[Lemma 2.1]{Fa88}. For $A_1,\ldots,A_N \in M_d(\mathbb{R})$ and $s>0$ we define the \emph{singular value pressure} of $A_1,\ldots,A_N$  by
\[\mathbf{P}((A_1,\ldots,A_N),s):=\lim_{n \to \infty} \frac{1}{n}\log \left(\sum_{i_1,\ldots,i_n=1}^N \varphi^s\left(A_{i_1}\cdots A_{i_n}\right)\right) \in [-\infty,+\infty).\]
The singular value pressure plays a pivotal role in the dimension theory of self-affine fractals and has been extensively applied in that context (see e.g. \cite{Fa88,JoPoSi07,So98}). Let us recall the definition of a self-affine set. If $T_1,\ldots,T_N \colon \mathbb{R}^d \to \mathbb{R}^d$ are contractions with respect to the Euclidean metric -- that is, if there exists $\lambda \in [0,1)$ such that $\|T_ix-T_iy\| \leq \lambda \|x-y\|$ for all $x,y \in \mathbb{R}^d$ and $i=1,\ldots,N$ -- then by a well-known theorem of J. E. Hutchinson \cite{Hu81} there exists a unique nonempty compact set $X \subset \mathbb{R}^d$, called the \emph{attractor} of $T_1,\ldots,T_N$, which satisfies the equation
\[X=\bigcup_{i=1}^N T_iX.\]
If $T_1,\ldots,T_N$ are affine contractions then the resulting set $X$ is termed a \emph{self-affine set}. The following foundational result of K. Falconer \cite{Fa88} determines the Hausdorff dimension for ``typical'' self-affine sets with small enough contraction ratio:
\begin{theorem}[(Falconer)]
Let $A_1,\ldots,A_N \in M_d(\mathbb{R})$ with $\max\|A_i\|<\frac{1}{3}$, and define
\[\mathfrak{s}(A_1,\ldots,A_N):=\inf\left\{s>0 \colon \mathbf{P}((A_1,\ldots,A_N),s)<0\right\}.\]
Then for Lebesgue-almost-every $(v_1,\ldots,v_N) \in \mathbb{R}^{Nd}$ the attractor associated to the maps $T_1,\ldots,T_N \colon \mathbb{R}^d \to \mathbb{R}^d$ defined by $T_ix:=A_ix+v_i$ has Hausdorff dimension equal to $\min\{\mathfrak{s}(A_1,\ldots,A_N),d\}$.
\end{theorem}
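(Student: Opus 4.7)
The plan is to separate the conclusion into two inequalities: an upper bound $\dim_H X\le\min\{\mathfrak{s},d\}$ valid for \emph{every} translation vector, and a matching lower bound valid for Lebesgue-a.e.\ translation vector. Only the lower bound will invoke the hypothesis $\max_i\|A_i\|<\tfrac13$.

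For the upper bound I would use the natural self-affine cover $X=\bigcup_{|\mathbf i|=n}T_{\mathbf i}(B)$, where $B\supseteq X$ is a fixed bounded set and $T_{\mathbf i}=T_{i_1}\circ\cdots\circ T_{i_n}$. The image $T_{\mathbf i}(B)$ is contained in a parallelepiped whose side lengths are proportional to the singular values $\sigma_1\ge\cdots\ge\sigma_d$ of $A_{\mathbf i}:=A_{i_1}\cdots A_{i_n}$. For $s\in[k,k+1]$ with $k<d$, this parallelepiped is efficiently covered by $O(\sigma_1\cdots\sigma_k/\sigma_{k+1}^k)$ Euclidean balls of radius proportional to $\sigma_{k+1}$, contributing $O(\varphi^s(A_{\mathbf i}))$ to the $s$-dimensional Hausdorff pre-measure. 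Summing, one obtains a constant $C=C(B)$ with $\mathcal H^s_\delta(X)\le C\sum_{|\mathbf i|=n}\varphi^s(A_{\mathbf i})$ for every $n$, and this bound tends to $0$ as $n\to\infty$ whenever $s>\mathfrak{s}$, since then $\mathbf P(s)<0$. Hence $\dim_H X\le\mathfrak{s}$, and $\dim_H X\le d$ is automatic.

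For the lower bound, fix $s<\min\{\mathfrak{s},d\}$ so that $\mathbf P(s)>0$, and let $\nu_s$ be a Gibbs-type measure on $\{1,\ldots,N\}^{\mathbb N}$ whose cylinder weights are comparable to $\varphi^s(A_{\mathbf i})/Z_n$, with $Z_n:=\sum_{|\mathbf i|=n}\varphi^s(A_{\mathbf i})$ growing exponentially. Define $\mu_v=(\pi_v)_*\nu_s$ where
\[\pi_v(\omega)=\sum_{n=0}^\infty A_{\omega_1}\cdots A_{\omega_n}v_{\omega_{n+1}}.\]
By the Frostman energy criterion, it suffices to verify $\int_{[0,1]^{Nd}}I_s(\mu_v)\,dv<\infty$. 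The key analytic input is Falconer's integral estimate,
\[\int_{\|x\|\le 1}\|Ax\|^{-s}\,dx\le C_s\,\varphi^s(A)^{-1}\qquad(A\in GL_d(\mathbb R),\ 0<s<d).\]
Partitioning $I_s$ by the length $n$ of the longest common prefix $\mathbf i$ of $\omega$ and $\omega'$, writing $\pi_v(\omega)-\pi_v(\omega')=A_{\mathbf i}\Delta$, integrating first in the coordinate $v_{\omega_{n+1}}$ and changing variables reduces the inner integral to $\int\|A_{\mathbf i}u\|^{-s}\,du\le C\varphi^s(A_{\mathbf i})^{-1}$; assembling yields
\[\int I_s(\mu_v)\,dv\le C\sum_{n\ge 0}\sum_{|\mathbf i|=n}\nu_s([\mathbf i])^2\,\varphi^s(A_{\mathbf i})^{-1}\le C'\sum_{n\ge 0}Z_n^{-1}<\infty.\]
Therefore $\dim_H X\ge s$ for a.e.\ $v$, and taking $s\uparrow\min\{\mathfrak{s},d\}$ along a countable sequence completes the proof.

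The main obstacle is the transversality step that legitimises the change of variables. Writing the derivative of $\Delta$ in $v_{\omega_{n+1}}$ as $I+P$, the perturbation $P$ arises from re-occurrences of the symbol $\omega_{n+1}$ in the tails of $\omega$ and $\omega'$, and is bounded by $\|P\|\le 2\lambda/(1-\lambda)$ where $\lambda:=\max_i\|A_i\|$. The hypothesis $\lambda<\tfrac13$ is precisely the condition $2\lambda/(1-\lambda)<1$, which makes $I+P$ invertible with a Jacobian controlled uniformly in $\omega$, $\omega'$ and $\mathbf i$. Once this non-degeneracy is established, the remaining steps---Fubini, the cylinder bound for $\nu_s$, Falconer's integral estimate and summation of the geometric series $\sum Z_n^{-1}$---are routine.
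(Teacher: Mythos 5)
This statement is quoted from Falconer's 1988 paper and is not proved in the present article; there is no in-paper argument to compare against, only the citation. Your sketch is, however, a faithful outline of Falconer's original proof: the upper bound via the natural parallelepiped cover and efficient covering by balls of radius $\sim\sigma_{k+1}$, yielding $\mathcal H^s_\delta(X)\le C\sum_{|\mathbf i|=n}\varphi^s(A_{\mathbf i})\to 0$ when $\mathbf P(s)<0$; the lower bound via the potential-theoretic method, with the expected $s$-energy controlled by Falconer's singular-value integral lemma and the disintegration over the longest common prefix; and the correct identification of the hypothesis $\max_i\|A_i\|<\tfrac13$ with the transversality condition $2\lambda/(1-\lambda)<1$ that makes $v_{\omega_{n+1}}\mapsto\Delta$ a uniformly non-degenerate affine change of variables.

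One caveat about the measure construction. A Borel probability measure on $\{1,\dots,N\}^{\mathbb N}$ cannot have cylinder masses two-sidedly comparable to $\varphi^s(A_{\mathbf i})/Z_n$ at every level simultaneously: $\varphi^s$ is only \emph{sub}multiplicative, so the level-$n$ distributions $\varphi^s(A_{\mathbf i})/Z_n$ are not consistent under the projections $[\mathbf i]\mapsto\bigcup_j[\mathbf ij]$, and ``Gibbs-type'' overstates what exists. What Falconer actually produces, by a Frostman-type selection argument using $\mathbf P(s)>0$ (equivalently $Z_n\ge e^{\epsilon n}$ eventually), is a probability measure $\nu_s$ satisfying a \emph{one-sided} bound of the form $\nu_s([\mathbf i])\le e^{-\epsilon|\mathbf i|}\varphi^s(A_{\mathbf i})$. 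Substituting this upper bound into your double sum gives $\sum_n\sum_{|\mathbf i|=n}\nu_s([\mathbf i])^2/\varphi^s(A_{\mathbf i})\le\sum_n e^{-\epsilon n}<\infty$, which is exactly what the Frostman criterion needs; so the flaw is in the phrasing of the lemma, not in the structure of the argument, and the rest of your sketch goes through unchanged.
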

The condition $\max\|A_i\|<\frac{1}{3}$ was subsequently weakened to $\max\|A_i\|<\frac{1}{2}$ by B. Solomyak \cite{So98}, and to  $\max\|A_i\|<1$ by Jordan, Pollicott and Simon for a modified notion of self-affine set which incorporates random perturbations \cite{JoPoSi07}. In the absence of random perturbations the constant $\frac{1}{2}$ is optimal \cite{Ed92}. The function $\mathfrak{s}$, sometimes called the \emph{affinity dimension} or \emph{singularity dimension}, arises as a formula for fractal dimension in numerous other works, of which we note for example \cite{Fa92,HuLa95,KaSh09}.

Despite the interest of these results the computation and regularity of $\mathbf{P}$ and $\mathfrak{s}$ have been investigated in depth only relatively recently \cite{FaSl09,FeSh14,Fr15,MaSi07,PoVy15}; in particular, the continuity of the function $\mathfrak{s}$ with respect to the matrices $A_1,\ldots,A_N$ was established only in 2014 \cite{FeSh14}.  In this article we prove a general inequality which gives \emph{a priori} lower bounds for the norm pressure in arbitrary dimensions and for the singular value pressure in two dimensions. Using this result we give elementary proofs of the continuity of $\mathbf{M}$ and $\mathfrak{s}$, results which were previously established by D.-J. Feng and P. Shmerkin using multiplicative ergodic theory and thermodynamic formalism. Our methods also yield a new sufficient condition for the continuity of $\mathbf{P}$ which extends a previous result of Feng and Shmerkin; for two-dimensional matrices, our condition is necessary and sufficient. We also extend, unify and simplify results of Y. Guivarc'h, \'{E}. Le Page, M. Ogura and C.F. Martin which characterise the zero-temperature limit of the norm pressure in terms of the joint spectral radius. Finally, our results imply that the norm pressure (or $p$-radius) and affinity dimension of a finite set of matrices can in principle be computed rigorously to any prescribed degree of accuracy.

We will find it convenient to formulate our results in the context not of finite sets of matrices, but of measures. Given a Borel measure $\mu$ on $M_d(\mathbb{R})$ and a real number $s>0$ such that $\int \|A\|^sd\mu(A)<\infty$, let us define
\[\mathbf{M}(\mu,s):=\lim_{n \to \infty} \frac{1}{n}\log \left(\int \left\|A_1\cdots A_n\right\|^s d\mu(A_1)\ldots d\mu(A_n)\right),\]
and if instead $\int \varphi^s(A)d\mu(A)<\infty$ let us define
\[\mathbf{P}(\mu,s):=\lim_{n \to \infty} \frac{1}{n}\log \left(\int \varphi^s\left(A_1\cdots A_n\right) d\mu(A_1)\ldots d\mu(A_n)\right).\]
The pressures $\mathbf{M}((A_1,\ldots,A_N),s)$ and $\mathbf{P}((A_1,\ldots,A_N),s)$ of a finite set of matrices $A_1,\ldots,A_N$ defined previously may easily be seen to correspond to the pressures $\mathbf{M}(\mu,s)$, $\mathbf{P}(\mu,s)$ with $\mu:=\sum_{i=1}^N \delta_{A_i}$. To simplify our notation further, given a measure $\mu$ as above we define for each $n \geq 1$ a measure $\mu_n$ on  $M_d(\mathbb{R})$ by
\[\mu_n(X):=\left(\mu \times \cdots \times \mu\right)\left(\left\{(A_1,\ldots,A_n) \in M_d(\mathbb{R})^n \colon A_1\cdots A_n \in X\right\}\right)\]
for all Borel sets $X\subseteq M_d(\mathbb{R})$, where the product is of $n$ copies of the measure $\mu$. We then have
\[\int f(A)d\mu_n(A)=\int f\left(A_1\cdots A_n\right) d\mu(A_1)\ldots d\mu(A_n)\]
for all measurable functions $f \colon M_d(\mathbb{R}) \to \mathbb{R}$ for which the integrals are well-defined. In particular we have
\begin{equation}\label{eq:naff}\mathbf{M}(\mu,s)=\lim_{n \to \infty} \frac{1}{n}\log \int \|A\|^sd\mu_n(A)=\inf_{n \geq 1} \frac{1}{n}\log \int \|A\|^sd\mu_n(A),\end{equation}
\begin{equation}\label{eq:naffer}\mathbf{P}(\mu,s)=\lim_{n \to \infty} \frac{1}{n}\log \int \varphi^s(A)d\mu_n(A)=\inf_{n \geq 1} \frac{1}{n}\log \int \varphi^s(A)d\mu_n(A)\end{equation}
for all measures $\mu$ on $M_d(\mathbb{R})$ such that $\int \|A\|^sd\mu(A)$, $\int \varphi^s(A)d\mu(A)$ are finite. We note also the elementary identities $\mathbf{M}(\mu_n,s)=n\mathbf{M}(\mu,s)$, $\mathbf{P}(\mu_n,s)=n\mathbf{P}(\mu,s)$ which will be frequently used without comment.

The crux of this article is the following inequality, which is inspired by a theorem of J. Bochi for the joint spectral radius \cite[Theorem A]{Bo03}:
\begin{theorem}\label{th:mpressure}
Let $\mu$ be a measure on $M_d(\mathbb{R})$ and $s>0$ a real number such that $\int \|A\|^s\,d\mu(A)<\infty$. Then
\begin{equation}\label{eq:blahcunt}\int \|A\|^s d\mu_d(A) \leq K_{d,s} e^{\mathbf{M}(\mu,s)}\left(\int \|A\|^sd\mu(A)\right)^{d-1}\end{equation}
where $K_{d,s}:=d^{2+\left(d+1\right)s}\max\left\{d^{1-s},1\right\}$.
\end{theorem}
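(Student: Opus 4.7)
The plan adapts Bochi's argument for the joint spectral radius to the integrated-pressure setting. The scalar intuition: in the special case $\mu=\delta_A$ one has $\mu_d=\delta_{A^d}$ and $e^{\mathbf{M}(\mu,s)}=\rho(A)^s$, so (\ref{eq:blahcunt}) reduces to the Cayley--Hamilton consequence
\[
\|A^d\|^s \leq (2^d-1)^s\,\rho(A)^s\,\|A\|^{s(d-1)},
\]
obtained by writing $A^d$ as a combination of lower powers of $A$ with coefficients the elementary symmetric polynomials in the eigenvalues, each bounded by $\binom{d}{k}\rho(A)^k$. Theorem~\ref{th:mpressure} is a measure-theoretic upgrade of this scalar bound, with $e^{\mathbf{M}(\mu,s)}$ playing the role of $\rho(A)^s$.

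First I would normalize: by homogeneity of (\ref{eq:blahcunt}) under the rescaling $\mu\mapsto c\mu$, one may assume $\int\|A\|^s\,d\mu=1$, reducing the claim to $\int\|A\|^s\,d\mu_d\leq K_{d,s}\,e^{\mathbf{M}(\mu,s)}$. The main technical step would be a Bochi-style pointwise estimate bounding $\|A_1\cdots A_d\|^s$ by a sum of terms of the form $\rho(A_{i_1}\cdots A_{i_k})^s\prod_{j\notin I}\|A_j\|^s$, where $I=\{i_1,\ldots,i_k\}\subseteq\{1,\ldots,d\}$ ranges over subsets of size $k\in\{1,\ldots,d\}$ and $A_{i_1}\cdots A_{i_k}$ denotes a cyclic sub-product. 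Such an estimate would follow by applying the real Schur decomposition to $B=A_1\cdots A_d$ (so that the triangular part's diagonal contributes $\rho(B)$ and the off-diagonal part is bounded by operator norms of individual factors) and using the cyclic invariance of the spectral radius to replace $\rho(B)$ by $\rho$ of any cyclic sub-product of $A_1,\ldots,A_d$.

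Integrating this pointwise estimate against $\mu^d$, Fubini separates each free-norm factor $\|A_j\|^s$ into $\int\|A\|^s\,d\mu=1$, leaving sums of terms $\int\rho(A_1\cdots A_k)^s\,d\mu^k$ for $k=1,\ldots,d$. Each of these is in turn controlled by $e^{\mathbf{M}(\mu,s)}$ (which, after normalization, satisfies $e^{\mathbf{M}(\mu,s)}\leq 1$) by combining $\rho(B)\leq\|B^n\|^{1/n}$ for arbitrary $n$, Jensen's inequality applied to the concave function $x\mapsto x^{1/n}$, and the subadditivity of $n\mapsto\int\|A\|^s\,d\mu_n$ expressed in (\ref{eq:naff}). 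Combining all contributions and tracking constants yields the desired inequality, with $K_{d,s}$ absorbing combinatorial counts over subsets $I$, Schur-decomposition losses (powers of $d^{1/2}$ converting operator to Frobenius norms, raised to the $s$th power), and case distinctions by the sign of $s-1$.

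The principal obstacle is the Bochi-style pointwise estimate itself: a direct application of Cayley--Hamilton to $B=A_1\cdots A_d$ only yields a bound on $\|B^d\|$, not on $\|B\|$, so extracting the desired decomposition of $\|A_1\cdots A_d\|^s$ for a product of $d$ distinct matrices requires Bochi's sharper machinery---essentially an iterated Schur triangularization combined with cyclic shifts of the spectral radius. A secondary obstacle is the regime $0<s<1$, where $x\mapsto x^s$ is concave rather than convex and several standard convexity-based manipulations reverse direction; this almost certainly accounts for the factor $\max\{d^{1-s},1\}$ in $K_{d,s}$ and will require a separate subadditivity argument to pass from vector-norm sums to operator-norm bounds.
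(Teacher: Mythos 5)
Your high-level plan (a pointwise Bochi-type decomposition followed by integration) is the right shape, but both of the two key steps as you describe them have genuine gaps, and neither corresponds to what the paper actually does.

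\textbf{The pointwise estimate.} You propose to bound $\|A_1\cdots A_d\|^s$ by a sum of terms $\rho(A_{i_1}\cdots A_{i_k})^s\prod_{j\notin I}\|A_j\|^s$ via the real Schur decomposition of $B=A_1\cdots A_d$, claiming that ``the off-diagonal part is bounded by operator norms of individual factors'' and invoking cyclic invariance. This does not hold: the strictly upper-triangular part of the Schur form of $B$ has entries that can be of size $\|B\|$ and bear no \emph{a priori} relation to the individual $\|A_i\|$; and cyclic invariance lets you permute the factors of $\rho(A_1\cdots A_d)$ cyclically, not discard some of them to obtain $\rho$ of a sub-product. Bochi's actual pointwise lemma (adapted here as Lemma~\ref{le:bochipart2}) says something different: for any invertible $B$,
\[
\|A_1\cdots A_d\|\leq d^d\max_{1\leq k\leq d}\Bigl(\|BA_kB^{-1}\|\prod_{i\neq k}\|A_i\|\Bigr),
\]
proved by reducing to diagonal $B$ via SVD and then a pigeonhole argument on the moduli of the diagonal entries. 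The spectral radius does not appear in this lemma at all.

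\textbf{The averaged bound.} Even granting a decomposition of the type you want, the step $\int\rho(A_1\cdots A_k)^s\,d\mu^k\leq C\,e^{k\mathbf{M}(\mu,s)}$ (up to the appropriate normalising factors) is itself a nontrivial \emph{lower} bound on $e^{\mathbf{M}(\mu,s)}$, of precisely the same difficulty as the theorem you are trying to prove, and your proposed mechanism for it fails. Using $\rho(B)\leq\|B^n\|^{1/n}$ and Jensen gives
\[
\int\rho(A_1\cdots A_k)^s\,d\mu^k\leq\Bigl(\int\|(A_1\cdots A_k)^n\|^s\,d\mu^k\Bigr)^{1/n}\cdot(\text{mass factor}),
\]
but the integral on the right is over $\mu^k$, with the \emph{same} $k$ factors repeated $n$ times, and is not $\int\|A\|^s\,d\mu_{nk}$; the two can differ dramatically, and the subadditivity \eqref{eq:naff} gives you no handle on the correlated integral. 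Concretely, with $\mu=\delta_A+\delta_B$ for the rank-one matrices $A=\bigl(\begin{smallmatrix}0&1\\0&0\end{smallmatrix}\bigr)$, $B=\bigl(\begin{smallmatrix}0&0\\1&0\end{smallmatrix}\bigr)$ one finds $\mathbf{M}(\mu,s)=0$ but $\int\rho(A_1A_2)^s\,d\mu^2=2>e^{2\mathbf{M}(\mu,s)}$, so the naive bound already fails; any salvage with a constant requires an argument of the same type as Theorem~\ref{th:mpressure} itself, making the approach circular.

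\textbf{What the paper does instead.} Rather than going through the spectral radius, the proof constructs from $\mu$ and $\varepsilon>0$ a Barabanov-type extremal norm (a genuine norm for $s\geq1$; an $s$-homogeneous $F$-norm for $0<s<1$) satisfying $\int\|Av\|_\varepsilon^s\,d\mu(A)\leq(e^{\mathbf{M}(\mu,s)}+\varepsilon)\|v\|_\varepsilon^s$ (Lemma~\ref{le:exists-norm}), then approximates its unit ball by an ellipsoid via John's theorem, producing a single $B\in GL_d(\mathbb{R})$ with $\int\|BAB^{-1}\|^s\,d\mu(A)\leq C_{d,s}e^{\mathbf{M}(\mu,s)}$ (Proposition~\ref{pr:gldr-bound}; the case $0<s<1$ additionally requires Fenchel's refinement of Carath\'eodory's theorem to handle non-convex unit balls, which is the source of the $d^{1-s}$ factor you rightly anticipated). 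Feeding this $B$ into the pigeonhole lemma and integrating over $\mu^d$ gives the result. The extremal-norm construction is the key idea missing from your proposal.
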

We observe that Theorem \ref{th:mpressure} implies a simple characterisation of the property $\mathbf{M}(\mu,s)=-\infty$:  if $\mu$ is a measure on $M_d(\mathbb{R})$ such that $\int \|A\|^sd\mu(A)<\infty$, then in view of \eqref{eq:naff} and \eqref{eq:blahcunt}, $\mathbf{M}(\mu,s)=-\infty$ if and only if $\int \|A\|^sd\mu_d(A)=0$.  However, the most signficiant immediate application of this theorem is as follows: whereas the subadditivity of the sequence $\log \int \|A\|^sd\mu_n$ yields the identity
\[\mathbf{M}(\mu,s) = \inf_{n \geq 1}\frac{1}{n}\log\int \|A\|^s d\mu_n(A)\]
as in \eqref{eq:naff}, by applying Theorem \ref{th:mpressure} to each $\mu_n$ we may derive the identity
\[\mathbf{M}(\mu,s) = \sup_{n \geq 1}\frac{1}{n}\log\left(\frac{\int \|A\|^s\mu_{nd}(A)}{K_{d,s}\left(\int \|A\|^s d\mu_n(A)\right)^{d-1}}\right)\]
when $\mathbf{M}(\mu,s)>-\infty$. Acting in concert these upper and lower estimates make certain continuity and limit properties of the norm pressure almost trivial. The applications of Theorem \ref{th:mpressure} are described in detail in the following section.

\section{Applications and extensions of Theorem \ref{th:mpressure}}

\subsection{Continuity properties of matrix pressure functions}

We begin by presenting those applications of Theorem \ref{th:mpressure}  which pertain to the continuity of $\mathbf{M}$, $\mathbf{P}$ and $\mathfrak{s}$. In order to simplify the statement of our continuity results we will  restrict our attention to finite measures $\mu$  whose support is contained in a prescribed compact subset of $M_d(\mathbb{R})$; subject to suitable attention to the problems caused by the possible divergence of integrals, more general results could in principle be derived by similar methods. By rescaling $M_d(\mathbb{R})$ if necessary we lose no generality in considering measures on the closed unit ball of $M_d(\mathbb{R})$, and by rescaling the measures if necessary we shall assume them to be probability measures. We therefore denote the set of all Borel probability measures on the closed unit ball of $M_d(\mathbb{R})$ by $\mathcal{M}_d$ and equip this set with the weak-* topology, which is the smallest topology such that $\mu \mapsto \int f\,d\mu$ is continuous for every real-valued continuous function $f$ defined on the closed unit ball of $M_d(\mathbb{R})$. With respect to this topology $\mathcal{M}_d$ is compact and metrisable.

Our first application of Theorem \ref{th:mpressure} is the following simple result:
\begin{theorem}\label{th:mcontinuous}
The function $\mathbf{M} \colon \mathcal{M}_d\times (0,+\infty) \to [-\infty,+\infty)$ is continuous.
\end{theorem}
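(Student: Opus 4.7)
The plan is to deduce the theorem from separate upper and lower semi-continuity assertions, exploiting the two representations of $\mathbf{M}$ as an infimum (from subadditivity) and as a supremum (from Theorem~\ref{th:mpressure}). Throughout I would use the auxiliary functions
\[G_n(\mu,s) := \frac{1}{n}\log \int \|A\|^s\,d\mu_n(A),\qquad n \geq 1,\]
which, since $\mu$ is a probability measure supported in the closed unit ball, take values in $[-\infty,0]$. The identity $\mathbf{M}(\mu,s)=\inf_n G_n(\mu,s)$ is given by \eqref{eq:naff}.

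First I would verify that each $G_n$ is jointly continuous from $\mathcal{M}_d\times(0,\infty)$ to $[-\infty,0]$. The map $\mu \mapsto \mu^n$ is weak-* continuous and matrix multiplication $(A_1,\ldots,A_n)\mapsto A_1\cdots A_n$ is continuous, so the pushforward $\mu \mapsto \mu_n$ is weak-* continuous. Combined with the fact that $(A,s)\mapsto \|A\|^s$ is jointly continuous and uniformly bounded on $\{\|A\|\leq 1\}$ as $s$ ranges over a compact subinterval of $(0,\infty)$, this makes $(\mu,s)\mapsto \int\|A\|^s\,d\mu_n(A)$ jointly continuous, and taking logarithms preserves continuity into $[-\infty,0]$. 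Upper semi-continuity of $\mathbf{M}$ follows at once, since an infimum of continuous functions is upper semi-continuous.

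For lower semi-continuity, fix a point $(\mu_0,s_0)$. If $\mathbf{M}(\mu_0,s_0)=-\infty$ there is nothing to prove. Otherwise, applying Theorem~\ref{th:mpressure} to $\mu_n$ in place of $\mu$, and using $(\mu_n)_d=\mu_{nd}$ together with the homogeneity $\mathbf{M}(\mu_n,s)=n\mathbf{M}(\mu,s)$, yields the \emph{a priori} lower bound
\[\mathbf{M}(\mu,s)\;\geq\; F_n(\mu,s)\;:=\;d\, G_{nd}(\mu,s)-(d-1)\, G_n(\mu,s)-\tfrac{1}{n}\log K_{d,s}.\]
At $(\mu_0,s_0)$ every $G_m$ lies in $[\mathbf{M}(\mu_0,s_0),0]$ and is therefore finite, so each $F_n$ is a continuous real-valued function at $(\mu_0,s_0)$. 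Since $G_m(\mu_0,s_0)\to \mathbf{M}(\mu_0,s_0)$ as $m\to\infty$, we also have $F_n(\mu_0,s_0)\to \mathbf{M}(\mu_0,s_0)$. Given $\varepsilon>0$, fix $n$ with $F_n(\mu_0,s_0)>\mathbf{M}(\mu_0,s_0)-\varepsilon$; on a suitable neighbourhood of $(\mu_0,s_0)$ one then has $\mathbf{M}(\mu,s)\geq F_n(\mu,s)>\mathbf{M}(\mu_0,s_0)-2\varepsilon$, giving lower semi-continuity and completing the proof.

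The hard part is really the lower semi-continuity: subadditivity alone only presents $\mathbf{M}$ as an infimum of continuous functions, which is far from enough. Theorem~\ref{th:mpressure} supplies precisely the missing ingredient, namely a matching family of continuous lower bounds $F_n$ that converge pointwise to $\mathbf{M}$ from below wherever $\mathbf{M}$ is finite, turning the one-sided estimate into a two-sided pinch.
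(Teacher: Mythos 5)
Your proof is correct and follows essentially the same two‑sided argument as the paper: upper semi‑continuity from the infimum representation $\mathbf{M}=\inf_n G_n$, and lower semi‑continuity from the matching supremum of lower bounds $F_n$ supplied by Theorem~\ref{th:mpressure}, with the same care taken to work in a neighbourhood where $\mathbf{M}>-\infty$ so that the $G_n$ (and hence the $F_n$) are finite and continuous. The only cosmetic difference is that you name the auxiliary functions $G_n$, $F_n$ explicitly, whereas the paper manipulates the corresponding integral expressions directly.
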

This result has the particular corollary that the norm pressure $\mathbf{M}$ (and hence also the $p$-radius $\varrho_p$)  is continuous on finite sets of matrices with fixed cardinality, since by rescaling $M_d(\mathbb{R})$ if necessary we may assume all of these matrices to lie in the open unit ball of $M_d(\mathbb{R})$, and as previously noted the pressures of these finite sets correspond to the pressures of finite sums of Dirac measures. The continuity of $\mathbf{M}$ as a function of a finite set of matrices was previously proved by D.-J. Feng and P. Shmerkin using a combination of the Oseledets multiplicative ergodic theorem and a characterisation of the pressure via a subadditive variational principle \cite[Theorem 1.3]{FeSh14}. As well as generalising this result to the context of measures our proof is more elementary, requiring no use of ergodic theory or thermodynamic formalism.

More significantly, Theorem \ref{th:mpressure} yields the following extension of a theorem of D.-J. Feng and P. Shmerkin \cite{FeSh14}:
\begin{theorem}\label{th:fcontinuous}
The function $\mathbf{P} \colon \mathcal{M}_d \times (0,+\infty)\to [-\infty,+\infty)$ enjoys the following continuity properties:
\begin{enumerate}[(i)]
\item
The function $\mathbf{P}$ is continuous on $\mathcal{M}_d \times (d-1,+\infty)$ and on each of the sets $\mathcal{M}_d \times (k,k+1]$ for integers $k$ such that $0 \leq k<d$.
\item
For each $\mu \in \mathcal{M}_d$, define a measure $\mu^0$ on the closed unit ball of $M_d(\mathbb{R})$ by
\[\mu^0(X):=\mu\left(\left\{A \in X \colon \det A \neq 0\right\}\right)\]
for every Borel measurable set $X$. Let $k$ be an integer such that $0<k<d$. If $\mathbf{P}(\nu,k)=\mathbf{P}(\nu^0,k)$, then the function $(\mu,s) \mapsto \mathbf{P}(\mu,s)$ is continuous at $(\nu,k)$.
\end{enumerate}
\end{theorem}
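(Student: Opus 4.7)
The plan is to prove the theorem by combining the (automatic) upper semi-continuity of $\mathbf{P}$ on all of $\mathcal{M}_d\times(0,\infty)$ with lower semi-continuity on the specified regions, derived from Theorem~\ref{th:mpressure} applied to suitable exterior-power push-forwards. For USC, I would observe that $\varphi^s$ is bounded and upper semi-continuous in $(A,s)$, that $\mu\mapsto\mu_n$ is weak-$*$ continuous, and that $\nu\mapsto\int f\,d\nu$ is USC for bounded USC $f$; hence each $(\mu,s)\mapsto\tfrac{1}{n}\log\int\varphi^s\,d\mu_n$ is jointly USC, and so is its infimum $\mathbf{P}$.

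For LSC at integer $s=k\in\{1,\ldots,d\}$, my plan is to exploit the identity $\varphi^k(A)=\|\wedge^k A\|$ together with multiplicativity of $\wedge^k$ to obtain $\mathbf{P}(\mu,k)=\mathbf{M}(\wedge^k\mu,1)$, where $\wedge^k\mu$ is the push-forward of $\mu$ by the continuous map $A\mapsto\wedge^k A$ (rescaled into the unit ball of $M_{\binom{d}{k}}(\mathbb{R})$). Since $\mu\mapsto\wedge^k\mu$ is weak-$*$ continuous, Theorem~\ref{th:mcontinuous} delivers joint continuity of $\mu\mapsto\mathbf{P}(\mu,k)$; equivalently, Theorem~\ref{th:mpressure} applied to $\wedge^k\mu$ produces continuous lower bounds
\[
\mathbf{P}(\mu,k)\ge\frac{1}{n}\log\frac{\int\varphi^k\,d\mu_{n\binom{d}{k}}}{K_{\binom{d}{k},1}\bigl(\int\varphi^k\,d\mu_n\bigr)^{\binom{d}{k}-1}},
\]
whose supremum over $n$ equals $\mathbf{P}(\mu,k)$. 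For $s\ge d$ the multiplicativity of $|\det|$ gives $\mathbf{P}(\mu,s)=\log\int|\det A|^{s/d}\,d\mu$, jointly continuous in $(\mu,s)$. These two observations handle the integer endpoint $s=k+1$ of each interval $(k,k+1]$ and the entire region $s\ge d$.

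For non-integer $s\in(k,k+1)$, I note that $\varphi^s$ is jointly continuous in $(A,s)$ on the specified regions --- the $s$-discontinuities at integer $s$ coming from matrices with $\sigma_{s+1}=0<\sigma_s$ only trigger when approaching $s$ from above, which is excluded by the half-open intervals $(k,k+1]$ and by the range $s>d-1$. To establish LSC at such $s$, I would apply Theorem~\ref{th:mpressure} to a tensor push-forward: for rational $s-k=p/q\in(0,1)$, let $\tilde\mu$ denote the push-forward of $\mu$ by $A\mapsto(\wedge^k A)^{\otimes(q-p)}\otimes(\wedge^{k+1}A)^{\otimes p}$, whose operator norm equals $\varphi^s(A)^q$. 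Theorem~\ref{th:mpressure} applied to $\tilde\mu$ yields a continuous family of lower bounds for $\mathbf{M}(\tilde\mu,1)$, and combined with the sandwich $q\mathbf{P}(\mu,s)\le\mathbf{M}(\tilde\mu,1)\le\mathbf{P}(\mu,s)$ (from Jensen and $\varphi^s\le 1$ on the unit ball) plus the convexity and monotonicity of $s\mapsto\mathbf{P}(\mu,s)$, I would extend LSC from rational $s$ to all real $s$ in the relative interior of each interval, using density of rationals and the continuity of $s\mapsto\mathbf{P}(\nu,s)$ on open intervals of finiteness.

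For Part~(ii), the obstruction to joint continuity at $(\nu,k)$ with integer $0<k<d$ is the USC jump of $\varphi^s$ as $s\to k^+$ at matrices $A$ with $\sigma_{k+1}(A)=0<\sigma_k(A)$, i.e., rank-$k$ singular matrices in the support of $\nu$. The hypothesis $\mathbf{P}(\nu,k)=\mathbf{P}(\nu^0,k)$ asserts that the restriction of $\nu$ to $\{\det\ne 0\}$ already accounts for the full pressure at $s=k$, so the singular matrices contribute nothing; approximating $\nu$ by measures supported on invertible matrices, where $\varphi^s$ is jointly continuous in $(A,s)$ near $s=k$, restores continuity at $(\nu,k)$. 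The hardest step will be the LSC for non-integer $s$: tightening the sandwich $q\mathbf{P}(\mu,s)\le\mathbf{M}(\tilde\mu,1)\le\mathbf{P}(\mu,s)$ into a continuous lower bound for $\mathbf{P}(\mu,s)$ itself, rather than only for $\mathbf{M}(\tilde\mu,1)$, will require careful use of the convexity in $s$ and the monotonicity of $\mathbf{P}$ to propagate rational-$s$ LSC to all real $s$.
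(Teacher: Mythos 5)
Your overall architecture is right and essentially matches the paper's: upper semi-continuity via the infimum formula and upper semi-continuity of $\varphi$, reduction of the rational-$s$ case to Theorem~\ref{th:mcontinuous} via exterior/tensor-power push-forwards, and propagation from rational $s$ to all $s$ using monotonicity in $s$. However, there are two genuine gaps.

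First, in the reduction for non-integer rational $s = k + p/q$: having constructed the push-forward $\tilde\mu$ by $A \mapsto (\wedge^k A)^{\otimes(q-p)} \otimes (\wedge^{k+1}A)^{\otimes p}$, with $\|(\cdot)\| \equiv \varphi^s(A)^q$, you should work with $\mathbf{M}(\tilde\mu, 1/q)$ rather than $\mathbf{M}(\tilde\mu,1)$. With the exponent $1/q$ one has the \emph{exact identity} $\mathbf{M}(\tilde\mu,1/q) = \mathbf{P}(\mu,s)$, since $\int \|A\|^{1/q}\, d\tilde\mu_n = \int \varphi^s(A)\, d\mu_n$ for every $n$, and then Theorem~\ref{th:mcontinuous} immediately gives continuity of $\mu \mapsto \mathbf{P}(\mu,s)$ for each fixed rational $s$. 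The sandwich $q\mathbf{P}(\mu,s) \leq \mathbf{M}(\tilde\mu,1) \leq \mathbf{P}(\mu,s)$ that you derive (via Jensen and $\varphi^s\le 1$) is correct but not tight --- when $\mathbf{P}(\mu,s)<0$ it only pins down $\mathbf{P}(\mu,s)$ to a whole interval, and you cannot extract continuity of $\mathbf{P}(\cdot,s)$ from it. You correctly flag this as the ``hardest step,'' but the fix is simply to keep the $1/q$ power; no sandwich or convexity machinery is needed at the rational level. (Once rational $s$ is handled, your propagation strategy using monotonicity and continuity of $s\mapsto\mathbf{P}(\nu,s)$ on each $(k,k+1)$ works; the paper does the same via a decreasing-and-convex lemma.)

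Second, Part~(ii) is only sketched at the level of intuition, and the decisive argument is missing. The intuition that the obstruction comes from rank-$k$ matrices in the support is right, but ``approximating $\nu$ by measures supported on invertible matrices'' is not on its own a proof: one needs to control $\mathbf{P}$ jointly in $(\nu,s)$ across the approximation and as $s\to k^+$, and the a priori lower bound is indispensable here. The paper's argument restricts to the compact sets $K_\varepsilon = \{\sigma_d \geq \varepsilon\}$ and open sets $U_\varepsilon = \{\sigma_d > \varepsilon\}$, uses $\sigma_{k+1}(A) \geq \varepsilon$ on $K_\varepsilon$ to get $\mathbf{P}(\overline{\nu}^\varepsilon, s) \geq (s-k)\log\varepsilon + \mathbf{P}(\overline{\nu}^\varepsilon, k)$, applies the pressure analogue of Theorem~\ref{th:mpressure} (the paper's Proposition on $\int\varphi^s\,d\mu_{d'}$) to the truncated measures to obtain a supremum of weak-$*$ lower semi-continuous lower bounds, and finally passes to $\varepsilon\to 0$ by monotone convergence, arriving at $\mathbf{P}(\mu^0,k)$. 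None of this is in your plan, and it is precisely where the hypothesis $\mathbf{P}(\nu,k) = \mathbf{P}(\nu^0,k)$ enters. You would need to supply an argument of this type to complete (ii).
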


Similarly to Theorem \ref{th:mcontinuous}, it was previously shown by D.-J. Feng and P. Shmerkin that for fixed $s$, $\mathbf{P}$ is continuous as function of finitely many invertible matrices $A_1,\ldots,A_N \in M_d(\mathbb{R})$, and that $\mathbf{P}$ is continuous in $((A_1,\ldots,A_N),s)$ when $s$ is not one of the integers $1,\ldots,d-1$ or when all of the matrices $A_1,\ldots,A_N$ are invertible \cite[Theorem 1.2]{FeSh14}. These results may be rederived from Theorem \ref{th:fcontinuous} by considering finitely supported measures in the same manner as for $\mathbf{M}$.  As with Theorem \ref{th:mcontinuous}, the work of Feng and Shmerkin relies on the multiplicative ergodic theorem and a variational characterisation of the pressure; our proof, on the other hand, is an application of Theorems \ref{th:mpressure} and \ref{th:mcontinuous}. As noted by Feng and Shmerkin, the above result implies the following:
\begin{theorem}\label{th:cks}
Denote the open unit ball of $M_d(\mathbb{R})$ by $\mathbf{B}_d$. For each $A_1,\ldots,A_N \in \mathbf{B}_d$ define
\[\mathfrak{s}(A_1,\ldots,A_N):=\inf \left\{s>0 \colon \lim_{n \to \infty} \frac{1}{n}\log \sum_{i_1,\ldots,i_n=1}^N \varphi^s\left(A_{i_1}\cdots A_{i_N}\right)<0\right\} \in [0,+\infty).\]
Then $\mathfrak{s} \colon \mathbf{B}_d^N \to [0,+\infty)$ is continuous.
\end{theorem}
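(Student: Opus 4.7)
The plan is to derive Theorem \ref{th:cks} from Theorem \ref{th:fcontinuous}(i) by viewing $(A_1,\ldots,A_N) \in \mathbf{B}_d^N$ through the probability measure $\frac{1}{N}\sum_{i=1}^N \delta_{A_i} \in \mathcal{M}_d$. The assignment is weak-$*$ continuous, and $\mathbf{P}((A_1,\ldots,A_N),s) = \mathbf{P}\!\left(\tfrac{1}{N}\sum \delta_{A_i},s\right) + \log N$, so Theorem \ref{th:fcontinuous}(i) gives that $(A_1,\ldots,A_N,s) \mapsto \mathbf{P}((A_1,\ldots,A_N),s)$ is continuous at every point of $\mathbf{B}_d^N \times (0,\infty)$ for which $s \notin \{1,\ldots,d-1\}$. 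Since the affinity dimension is determined by the sign of $\mathbf{P}$, the question becomes how to side-step continuity failures at integer $s$.

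The key auxiliary fact I would prove is that $s \mapsto \mathbf{P}((A_1,\ldots,A_N),s)$ is strictly decreasing at a uniform rate for tuples in $\mathbf{B}_d^N$. Concretely, if $r := \max_i \|A_i\| < 1$, then any $n$-fold product $A = A_{i_1}\cdots A_{i_n}$ satisfies $\|A\| \leq r^n$, and a direct computation using the definition of $\varphi^s$ on each integer strip $[k,k+1]$ shows $\varphi^{s_2}(A) \leq \|A\|^{s_2-s_1}\varphi^{s_1}(A) \leq r^{n(s_2-s_1)}\varphi^{s_1}(A)$ whenever $s_1 < s_2$. Summing over words of length $n$, taking $\frac{1}{n}\log$, and passing to the limit yields
\[
\mathbf{P}((A_1,\ldots,A_N),s_2) \leq \mathbf{P}((A_1,\ldots,A_N),s_1) + (s_2-s_1)\log r.
\]

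With these two tools in hand, I would fix $(A_1^\circ,\ldots,A_N^\circ) \in \mathbf{B}_d^N$, write $s^* := \mathfrak{s}(A_1^\circ,\ldots,A_N^\circ)$, and prove both one-sided bounds. For upper semi-continuity, given $\varepsilon > 0$, I would pick a non-integer $s \in (s^*, s^*+\varepsilon)$, note $\mathbf{P}((A_1^\circ,\ldots,A_N^\circ),s) < 0$ by definition of the infimum together with monotonicity, and invoke continuity at the non-integer $s$ to get $\mathbf{P}((B_1,\ldots,B_N),s) < 0$ for $(B_i)$ near $(A_i^\circ)$; hence $\mathfrak{s}(B_1,\ldots,B_N) \leq s < s^*+\varepsilon$. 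For lower semi-continuity, assuming $s^* > 0$, given $0 < \varepsilon < s^*$ I would pick a non-integer $s \in (s^*-\varepsilon,s^*)$ and some $s' \in (s,s^*)$; since $s' < s^*$ gives $\mathbf{P}((A_1^\circ,\ldots,A_N^\circ),s') \geq 0$, the monotonicity bound applied with the fixed rate $r^\circ := \max_i \|A_i^\circ\| < 1$ promotes this to $\mathbf{P}((A_1^\circ,\ldots,A_N^\circ),s) \geq (s'-s)|\log r^\circ| > 0$. Continuity at the non-integer $s$ then yields $\mathbf{P}((B_1,\ldots,B_N),s) > 0$ for $(B_i)$ near $(A_i^\circ)$, whence $\mathfrak{s}(B_1,\ldots,B_N) \geq s > s^*-\varepsilon$.

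The main obstacle is exactly what the strict monotonicity is designed to overcome: in the lower bound argument, the definition of $s^*$ as an infimum only supplies $\mathbf{P} \geq 0$ for $s < s^*$, which is not robust enough to survive a small perturbation of the matrices. Strict decrease with a uniform rate turns this nonnegativity into a strictly positive value for slightly smaller $s$, and that strict positivity \emph{is} preserved under perturbation via continuity at non-integer $s$. The finiteness of the exceptional integer set $\{1,\ldots,d-1\}$ ensures that suitable non-integer approximations to $s^*$ always exist.
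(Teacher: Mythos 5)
Your argument is correct, but it deliberately takes a slightly heavier route than the paper. You invoke the full force of Theorem \ref{th:fcontinuous}(i) to get joint continuity of $\mathbf{P}$ at non-integer $s$, and then combine it with the strict decrease inequality $\mathbf{P}(\cdot,s_2)\leq\mathbf{P}(\cdot,s_1)+(s_2-s_1)\log r$. The paper explicitly flags this option (``Whilst Theorem \ref{th:cks} can be deduced from the full strength of Theorem \ref{th:fcontinuous}(i) in a similar manner to \cite{FeSh14}, we will give a proof using Lemma \ref{le:rational} alone'') and then chooses a more economical path: it uses only Lemma \ref{le:rational}, which gives continuity of $\mu\mapsto\mathbf{P}(\mu,s)$ at each fixed \emph{rational} $s$ and follows directly from Theorem \ref{th:mcontinuous} via a multilinear-algebra reduction, without needing the monotonicity/convexity machinery of Lemma \ref{le:decr} or the semi-continuity analysis that go into Theorem \ref{th:fcontinuous}(i). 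The paper uses the very same strict-decrease observation that you isolate (in the form $e^{\mathbf{P}(\cdot,s+t)}\leq\theta^te^{\mathbf{P}(\cdot,s)}$) but packages the conclusion more compactly: it writes $\mathfrak{s}$ as both $\inf\{s:\mathbf{P}<0\}$ and $\sup\{s:\mathbf{P}>0\}$, and then exhibits $\mathfrak{s}^{-1}([0,s))$ and $\mathfrak{s}^{-1}((s,\infty))$ as unions over rational parameters of sets that are open by Lemma \ref{le:rational}. Your two-sided $\varepsilon$-argument reaches the same conclusion and is perfectly valid; what the paper's approach buys is independence from the harder parts of the continuity theory for $\mathbf{P}$, so that Theorem \ref{th:cks} sits just above Theorem \ref{th:mcontinuous} in the logical structure rather than just below Theorem \ref{th:fcontinuous}.

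Two small points worth noting if you polish this into a final proof. First, in the upper semi-continuity step you should observe that even if $\mathbf{P}((A_i^\circ),s)=-\infty$ the argument still works, since continuity into $[-\infty,+\infty)$ forces nearby values to be eventually $<0$. Second, it is worth a line to confirm that $\{s>0:\mathbf{P}(\cdot,s)<0\}$ is an up-set (by monotonicity of $\mathbf{P}$ in $s$), which is what justifies both ``$s>s^*\Rightarrow\mathbf{P}(\cdot,s)<0$'' and ``$s'<s^*\Rightarrow\mathbf{P}(\cdot,s')\geq 0$''; you gesture at this but do not say it explicitly.
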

In fact, to prove the above statement it is sufficient to know that $\mathbf{P}(\sum_{i=1}^N \delta_{A_i},s)$ depends continuously on $A_1,\ldots,A_N$ for every fixed rational $s$, and this fact can easily be deduced from Theorem \ref{th:mcontinuous}: see \S{\ref{se:cuntcunt}} below.

In two dimensions one may easily derive the following sharper characterisation of the continuity prioperties of $\mathbf{P}$:
\begin{theorem}\label{th:fcontinuous2}
The function $\mathbf{P} \colon \mathcal{M}_2 \times (0,+\infty) \to [-\infty,+\infty)$ is \emph{discontinuous} at $(\mu,s)$ if and only if $s=1$ and $\mathbf{P}(\mu,1)>\mathbf{P}(\mu^0,1)$.
\end{theorem}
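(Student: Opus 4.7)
The plan is to combine Theorem \ref{th:fcontinuous} with the feature special to two dimensions that $\varphi^s$ vanishes on every singular matrix as soon as $s>1$, so that the ``invertible-part'' measure $\mu^0$ and the full measure $\mu$ become indistinguishable for the singular value pressure whenever $s>1$. For the ``if'' direction I dispose of $s\neq 1$ immediately from Theorem \ref{th:fcontinuous}(i): with $d=2$ the sets $\mathcal{M}_2\times(1,+\infty)$, $\mathcal{M}_2\times(0,1]$ and $\mathcal{M}_2\times(1,2]$ cover every $(\mu,s)$ whose second coordinate is not the integer $1$, and on each of these $\mathbf{P}$ is continuous. At $s=1$ the hypothesis $\mathbf{P}(\mu,1)=\mathbf{P}(\mu^0,1)$ is exactly what is needed to apply Theorem \ref{th:fcontinuous}(ii) with $k=1$, yielding continuity at $(\mu,1)$.

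For the ``only if'' direction, suppose $\mathbf{P}(\mu,1)>\mathbf{P}(\mu^0,1)$; I will produce a sequence $(\mu,s_n)\to(\mu,1)$ along which $\mathbf{P}$ fails to converge to $\mathbf{P}(\mu,1)$. Choose $s_n\downarrow 1$ with $s_n>1$. For $s>1$ and $d=2$ the formulae $\varphi^s(A)=\sigma_1(A)\sigma_2(A)^{s-1}$ for $s\in(1,2]$ and $\varphi^s(A)=|\det A|^{s/2}$ for $s\geq 2$ both vanish whenever $A$ is singular, and a product of two-by-two matrices is singular as soon as any one factor is. Hence for every $s>1$ and every $n\geq 1$,
\begin{equation*}
\int \varphi^s(A)\,d\mu_n(A) = \int \varphi^s(A)\,d(\mu^0)_n(A),
\end{equation*}
so $\mathbf{P}(\mu,s)=\mathbf{P}(\mu^0,s)$ for all $s>1$. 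Since $\mu^0$ is supported on invertible matrices we have the tautology $(\mu^0)^0=\mu^0$, so Theorem \ref{th:fcontinuous}(ii) applied to $\mu^0$ yields continuity of $\mathbf{P}$ at $(\mu^0,1)$; in particular $\mathbf{P}(\mu^0,s_n)\to\mathbf{P}(\mu^0,1)$. Combining these gives $\mathbf{P}(\mu,s_n)=\mathbf{P}(\mu^0,s_n)\to\mathbf{P}(\mu^0,1)<\mathbf{P}(\mu,1)$, which is the required discontinuity.

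No step is really the main obstacle; the whole theorem is a matter of dovetailing parts (i) and (ii) of Theorem \ref{th:fcontinuous} with the two-dimensional peculiarity that the only integer in $(0,d)$ coincides with the unique rank-drop threshold of $\varphi^s$ inside $(0,d)$. The only item requiring a moment's thought is the legitimacy of invoking (ii) on $\mu^0$ itself, which is immediate from $(\mu^0)^0=\mu^0$.
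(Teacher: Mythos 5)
Your proof is correct and the overall architecture matches the paper's, but your ``only if'' step takes a genuinely different (and heavier) route than the paper does. You reach the key identity $\mathbf{P}(\mu,s)=\mathbf{P}(\mu^0,s)$ for $s>1$ in the same way, but to control $\lim_{s\to1^+}\mathbf{P}(\mu^0,s)$ you invoke Theorem \ref{th:fcontinuous}(ii) on the measure $\mu^0$ itself via the tautology $(\mu^0)^0=\mu^0$. The paper instead avoids Theorem \ref{th:fcontinuous}(ii) entirely at this point and uses only the elementary pointwise inequality $\varphi^t(A)\le\|A\|^t$, the identity $\mathbf{P}(\cdot,1)=\mathbf{M}(\cdot,1)$, and the continuity of $\mathbf{M}$ (Theorem \ref{th:mcontinuous}): namely
\[
\limsup_{t\to1^+}\mathbf{P}(\mu,t)=\limsup_{t\to1^+}\mathbf{P}(\mu^0,t)\le\lim_{t\to1^+}\mathbf{M}(\mu^0,t)=\mathbf{M}(\mu^0,1)=\mathbf{P}(\mu^0,1)<\mathbf{P}(\mu,1).
\]
(In fact one needs only the monotonicity of $s\mapsto\mathbf{P}(\mu^0,s)$, which gives $\limsup_{t\to1^+}\mathbf{P}(\mu^0,t)\le\mathbf{P}(\mu^0,1)$ immediately; your argument obtains the stronger two-sided limit, which is more than is required for the contradiction.) The paper's variant buys a lighter dependency chain; yours is a clean and natural alternative.

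One small point of hygiene you should make explicit: $\mu^0$ is generally not a probability measure (its total mass is $\mu(\{\det A\neq0\})\le1$, and may be strictly less or even zero), so it does not literally lie in $\mathcal{M}_2$, while Theorem \ref{th:fcontinuous}(ii) is stated for $\mathcal{M}_2$. This is harmless: if $\mu^0=0$ then $\mathbf{P}(\mu^0,s)\equiv-\infty$ and the claim is trivial; otherwise replace $\mu^0$ by the normalisation $\tilde\mu^0:=\mu^0/\mu^0(M_2(\mathbb{R}))$, observe that $\mathbf{P}(c\nu,s)=\mathbf{P}(\nu,s)+\log c$ is an $s$-independent shift, and that $(\tilde\mu^0)^0=\tilde\mu^0$ still holds. (The paper's own appeal to Theorem \ref{th:mcontinuous} for $\mu^0$ has the same cosmetic issue and is resolved the same way.) With that caveat inserted, your proof stands.
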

Intuitively, Theorem \ref{th:fcontinuous2} asserts that $\mathbf{P}$ is discontinuous at $(\mu,1)$ if and only if the contribution to the pressure $\mathbf{P}(\mu,1)=\mathbf{M}(\mu,1)$ made by non-invertible matrices is negligible. It would be interesting to be able to more simply express the criterion $\mathbf{P}(\mu^0,1)<\mathbf{P}(\mu,1)$, but this problem appears to be somewhat delicate: for example, if $\mu=\delta_{A_1}+\delta_{A_2}$ where $A_1$ is the identity and $A_2$ is a rank-one matrix, then the reader may easily verify that $\mathbf{P}(\mu^0,1)=0$ irrespective of the choice of $A_2$, but $\mathbf{P}(\mu,1)=0$ if and only if $A_2$ is nilpotent.

We lastly note an application of the continuity of $\mathbf{M}$ and $\mathbf{P}$ which could have been obtained from the results in \cite{FeSh14}, but seems to have been previously unremarked:
\begin{proposition}\label{pr:whatwhat}
Let $A_1,\ldots,A_N \in M_d(\mathbb{R})$ and suppose that the matrices $A_i$ are simultaneously block upper-triangular in the following sense: there exist natural numbers $d_1,\ldots,d_k$ with $\sum_{i=1}^kd_i=d$ and matrices $A^{(ij)}_r$ with $1 \leq i \leq j \leq k$, $r=1,\ldots,N$, such that
\[A_r=\left(\begin{array}{ccccc}A^{(11)}_r&A^{(12)}_r&A_r^{(13)}&\cdots &A^{(1k)}\\
0&A^{(22)}_r&A^{(23)}_r&\cdots &A^{(2k)}\\
0&0&A^{(33)}_r&\cdots &A_r^{(3k)}\\
\vdots& \vdots&\vdots  & \ddots & \vdots\\
0 & 0 &0 & \cdots &A^{(kk)}_r
\end{array}\right)\]
for each $r=1,\ldots,N$, where each matrix $A_r^{(ij)}$ is of dimension $d_i\times d_j$. Let $A_1^D,\ldots,A^D_N \in M_d(\mathbb{R})$ be the block-diagonal matrices obtained from $A_1,\ldots,A_N$ by replacing every off-diagonal matrix $A^{(ij)}_r$ with zero. Then
\[\mathbf{M}\left(\sum_{i=1}^N \delta_{A_i},s\right)=\mathbf{M}\left(\sum_{i=1}^N \delta_{A_i^D},s\right),\qquad \mathbf{P}\left(\sum_{i=1}^N \delta_{A_i},s\right)=\mathbf{P}\left(\sum_{i=1}^N \delta_{A_i^D},s\right)\]
for every $s>0$.
\end{proposition}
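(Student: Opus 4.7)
The plan is to realise $\sum_{i=1}^N\delta_{A_i^D}$ as a weak-* limit of measures whose $\mathbf{M}$- and $\mathbf{P}$-pressures coincide with those of $\sum_{i=1}^N\delta_{A_i}$, and then invoke the continuity results Theorems \ref{th:mcontinuous} and \ref{th:fcontinuous}.

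For each $\epsilon\in(0,1)$ set $D_\epsilon:=\mathrm{diag}(I_{d_1},\epsilon I_{d_2},\ldots,\epsilon^{k-1}I_{d_k})$ and $A_r^{(\epsilon)}:=D_\epsilon^{-1}A_rD_\epsilon$. A direct block computation shows that the $(i,j)$-block of $A_r^{(\epsilon)}$ equals $\epsilon^{j-i}A_r^{(ij)}$ when $i\le j$ (and vanishes otherwise); thus the diagonal blocks are unchanged, every strictly upper block is damped by a positive power of $\epsilon$, and $A_r^{(\epsilon)}\to A_r^D$ in operator norm as $\epsilon\to 0$. Since $A_{i_1}^{(\epsilon)}\cdots A_{i_n}^{(\epsilon)}=D_\epsilon^{-1}(A_{i_1}\cdots A_{i_n})D_\epsilon$, submultiplicativity of the operator norm furnishes $C_\epsilon:=\|D_\epsilon\|\|D_\epsilon^{-1}\|$, independent of $n$, with
\[
C_\epsilon^{-1}\bigl\|A_{i_1}\cdots A_{i_n}\bigr\|\le\bigl\|A_{i_1}^{(\epsilon)}\cdots A_{i_n}^{(\epsilon)}\bigr\|\le C_\epsilon\bigl\|A_{i_1}\cdots A_{i_n}\bigr\|.
\]
After summing over $(i_1,\ldots,i_n)$, raising to the $s$th power, taking $\tfrac1n\log$ and letting $n\to\infty$, the constants disappear, yielding $\mathbf{M}(\sum_i\delta_{A_i^{(\epsilon)}},s)=\mathbf{M}(\sum_i\delta_{A_i},s)$. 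The same argument with $\varphi^s$ in place of $\|\cdot\|^s$, using the submultiplicativity $\varphi^s(XY)\le\varphi^s(X)\varphi^s(Y)$ together with the positive, finite factor $\varphi^s(D_\epsilon)\varphi^s(D_\epsilon^{-1})$ (available because $D_\epsilon$ is invertible), gives the corresponding identity for $\mathbf{P}$.

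To apply the continuity theorems, which require measures supported in the closed unit ball, fix $c>0$ with $c\max_r\|A_r\|<1$. Since $\|A_r^{(\epsilon)}\|\to\|A_r^D\|\le\|A_r\|$ as $\epsilon\to 0$, the atoms $cA_r^{(\epsilon)}$ and $cA_r^D$ all lie in the closed unit ball for sufficiently small $\epsilon$; rescaling every atom by $c$ (and normalising the measure by $1/N$ if one insists on a probability measure) shifts both pressures by an explicit constant depending only on $s$, $c$ and $N$, so the $\epsilon$-invariance of the previous paragraph survives the rescaling. As $\epsilon\to 0$ each atom $cA_i^{(\epsilon)}$ converges to $cA_i^D$, hence $\sum_i\delta_{cA_i^{(\epsilon)}}$ converges weak-* to $\sum_i\delta_{cA_i^D}$ in $\mathcal{M}_d$; for each fixed $s>0$ Theorems \ref{th:mcontinuous} and \ref{th:fcontinuous}(i) then give convergence of the respective pressures, and combining this convergence with the $\epsilon$-invariance and undoing the rescaling produces the two asserted equalities. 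The only genuinely delicate point is ensuring that the rescaled conjugates stay inside $\mathcal{M}_d$ as $\epsilon\to 0$, which is why the preliminary observation $\|A_r^{(\epsilon)}\|\to\|A_r^D\|$ (rather than $\|A_r^{(\epsilon)}\|\to\infty$, as the bound $\|D_\epsilon\|\|D_\epsilon^{-1}\|\to\infty$ might naively suggest) is essential.
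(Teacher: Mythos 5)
Your proof is correct and follows the same strategy as the paper's: show that conjugation by an invertible matrix leaves both pressure functions unchanged, then realise $\sum_{i}\delta_{A_i^D}$ as a weak-* limit of conjugates of $\sum_i\delta_{A_i}$ and invoke the continuity results Theorems \ref{th:mcontinuous} and \ref{th:fcontinuous}(i). You merely make explicit two points the paper leaves to the reader -- the specific conjugating matrices $D_\epsilon$ that exhibit $\sum_i\delta_{A_i^D}$ as a limit point of the conjugacy orbit, and the rescaling required so that the approximating measures actually lie in $\mathcal{M}_d$ -- but the underlying argument is identical.
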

\begin{proof}
The definitions of $\mathbf{M}$ and $\mathbf{P}$ combined with the elementary inequalities $\|B^{-1}AB\|^s\leq \|B^{-1}\|^s \|A\|^s \|B\|^s$ and $\varphi^s(B^{-1}AB)\leq \|B^{-1}\|^s\varphi^s(A)\|B\|^s$ imply that $\mathbf{M}(\cdot,s)$ and $\mathbf{P}(\cdot,s)$ are constant on the set
\[\left\{\sum_{i=1}^N \delta_{B^{-1}A_iB}\colon B \in GL_d(\mathbb{R})\right\},\]
and by continuity those functions are also constant on the closure of that set: but $\sum_{i=1}^N\delta_{A_i^D}$ belongs to that closure.
\end{proof}
In the special case of the singular value pressure of upper triangular matrices, Proposition \ref{pr:whatwhat} was previously noted by K. Falconer and J. Miao \cite{FaMi07} where it was proved by a long direct calculation; a related two-dimensional result may be found in \cite{Ba08}. The case of block upper-triangular matrices, and the short proof given above, are new.

\subsection{Inequalities for the singular value pressure}\label{ss:one}
Theorem \ref{th:mpressure} has the following partial analogue for the singular value pressure:
\begin{theorem}\label{th:fpressure}
Let $\mu$ be a measure on $M_2(\mathbb{R})$ and $s>0$ a real number such that $\int \varphi^s(A)\,d\mu(A)<\infty$. Then
\[\int \varphi^s(A) d\mu_2(A) \leq \tilde{K}_s e^{\mathbf{P}(\mu,s)}\int \varphi^s(A)d\mu(A)\]
where
\[\tilde{K}_{s}:=\left\{\begin{array}{cl}2^{3+2s}&\text{if }0<s\leq 1,\\
2^{7-2s}&\text{if }1\leq s <2,\\
1&\text{if }s \geq 2.\end{array}\right.\]
\end{theorem}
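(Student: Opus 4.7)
The plan is to deduce Theorem \ref{th:fpressure} from Theorem \ref{th:mpressure} applied in dimension $d=2$, after splitting into three ranges of $s$. The starting point is the explicit formula for $\varphi^s$ on $M_2(\mathbb{R})$:
\begin{equation*}
\varphi^s(A) =
\begin{cases}
\|A\|^s & \text{if } 0<s\le 1,\\
\|A\|^{2-s}|\det A|^{s-1} & \text{if } 1\le s\le 2,\\
|\det A|^{s/2} & \text{if } s\ge 2,
\end{cases}
\end{equation*}
which is valid for every $A\in M_2(\mathbb{R})$ because the operator norm of a $2\times 2$ matrix equals its top singular value and $\sigma_2(A)=|\det A|/\sigma_1(A)$ whenever $\sigma_1(A)\neq 0$; the middle expression vanishes when $\det A=0$ and $s>1$, matching $\varphi^s(A)$ in that case.

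For $0<s\le 1$ we have $\varphi^s=\|\cdot\|^s$, so $\mathbf{P}(\mu,s)=\mathbf{M}(\mu,s)$, and a direct application of Theorem \ref{th:mpressure} with $d=2$ yields the inequality with constant $K_{2,s}=2^{2+3s}\cdot 2^{1-s}=2^{3+2s}=\tilde{K}_s$. For $s\ge 2$, the function $\varphi^s=|\det|^{s/2}$ is genuinely multiplicative, so $\int\varphi^s\,d\mu_2=(\int\varphi^s\,d\mu)^2$ and $\mathbf{P}(\mu,s)=\log\int\varphi^s\,d\mu$, giving the inequality with equality and $\tilde{K}_s=1$. The substance of the theorem therefore lies in the middle range $1<s<2$.

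For $1<s<2$ I would introduce the auxiliary Borel measure on $M_2(\mathbb{R})$ given by
\[
d\nu(A):=|\det A|^{s-1}\,d\mu(A),
\]
which is automatically concentrated on invertible matrices, and set $t:=2-s\in(0,1)$. The pointwise identity $\varphi^s(A)=\|A\|^t|\det A|^{s-1}$ combined with the multiplicativity of $|\det|$ yields, for every $n\ge 1$,
\[
\int\|A\|^t\,d\nu_n(A)=\int\|A_1\cdots A_n\|^t\prod_{i=1}^n|\det A_i|^{s-1}\,d\mu(A_1)\cdots d\mu(A_n)=\int\varphi^s\,d\mu_n.
\]
Taking $n=1$ shows $\int\|A\|^t\,d\nu<\infty$, taking $n=2$ gives $\int\|A\|^t\,d\nu_2=\int\varphi^s\,d\mu_2$, and passing to the limit in $\frac{1}{n}\log$ yields $\mathbf{M}(\nu,t)=\mathbf{P}(\mu,s)$. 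Applying Theorem \ref{th:mpressure} to the pair $(\nu,t)$ in dimension $2$ then produces the desired inequality with constant $K_{2,t}=2^{3+2t}=2^{7-2s}=\tilde{K}_s$.

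The only real obstacle is spotting the change of measure: reweighting $\mu$ by the multiplicative function $|\det|^{s-1}$ converts the singular-value pressure $\mathbf{P}(\mu,s)$ into the norm pressure $\mathbf{M}(\nu,2-s)$ for a measure to which the already-proven Theorem \ref{th:mpressure} directly applies. Once this device is in place the argument is essentially bookkeeping, and the restriction to $d=2$ reflects precisely the availability in this dimension of the clean two-factor decomposition of $\varphi^s$ displayed above, which is not available in higher dimensions.
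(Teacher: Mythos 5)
Your proof is correct and follows essentially the same route as the paper's: the same three-way case split, the same direct invocation of Theorem \ref{th:mpressure} on the endpoints $0<s\le 1$ and $s\ge 2$, and the same change-of-measure device $d\hat\mu=|\det A|^{s-1}d\mu$ for $1<s<2$, exploiting the multiplicativity of the determinant to convert $\mathbf{P}(\mu,s)$ into $\mathbf{M}(\hat\mu,2-s)$ and reapplying Theorem \ref{th:mpressure} to obtain the constant $2^{7-2s}$.
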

The methods used in this article do not appear to be sufficient to prove an inequality analogous to \eqref{eq:blahcunt} for $\mathbf{P}$ in the case where $d>2$. We however conjecture the following extension of Theorem \ref{th:fpressure}:
\begin{conjecture}\label{co:only}
For each $d \geq 1$ there exist an integer $n(d)$ and a continuous function $\hat{K}_d \colon (0,+\infty) \to (0,+\infty)$ such that for every measure $\mu$ on $M_d(\mathbb{R})$ for which $\int \varphi^s(A)d\mu(A)<\infty$,
\[\int \varphi^s(A)d\mu_{n(d)}(A) \leq \hat{K}_d(s)e^{\mathbf{P}(\mu,s)}\left(\int \varphi^s(A)d\mu(A)\right)^{n(d)-1}.\]
\end{conjecture}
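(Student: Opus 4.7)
The plan is to generalise the strategy of the two-dimensional result (Theorem \ref{th:fpressure}) by reducing the singular value pressure to a norm pressure of an auxiliary matrix system built from exterior powers, then invoking Theorem \ref{th:mpressure}. Several ranges of $s$ yield to this scheme immediately. For $s \in (0,1]$, $\varphi^s(A) = \|A\|^s$, so Theorem \ref{th:mpressure} applied directly to $\mu$ gives the inequality with $n = d$. For $s \geq d$, $\varphi^s = |\det|^{s/d}$ is multiplicative and the inequality is trivial. For integer $s = k$ with $1 \leq k \leq d-1$, $\varphi^k(A) = \|\wedge^k A\|$, so Theorem \ref{th:mpressure} applied to the pushforward $(\wedge^k)_*\mu$ on $M_{\binom{d}{k}}(\mathbb{R})$ with exponent $1$ yields the inequality with $n = \binom{d}{k}$. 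For $s \in [d-1,d]$, the identity $\varphi^s(A) = \|\wedge^{d-1}A\|^{d-s}|\det A|^{s-d+1}$ lets me reweight $\mu$ by the multiplicative character $|\det|^{s-d+1}$ to obtain a measure $\tilde\mu$, giving $\mathbf{P}(\mu,s) = \mathbf{M}\bigl((\wedge^{d-1})_*\tilde\mu,\,d-s\bigr)$; Theorem \ref{th:mpressure} then provides the inequality with $n = d$. Submultiplicativity of $\varphi^s$ yields $\int\varphi^s\,d\mu_n \leq \int\varphi^s\,d\mu_{n_0}\cdot(\int\varphi^s\,d\mu)^{n-n_0}$, so any inequality of the required shape established for some $n_0$ persists for all $n \geq n_0$ with the same constant; taking $n(d) = \binom{d}{\lfloor d/2\rfloor}$ therefore unifies the cases above.

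The residual case is $s \in (k,k+1)$ with $0 < k < d-1$, which first arises at $d = 3$. Here $\varphi^s(A) = \|\wedge^k A\|^{k+1-s}\|\wedge^{k+1}A\|^{s-k}$ is a product of two non-multiplicative factors, and no multiplicative invariant analogous to $|\det|$ is available to absorb one of them into the measure. I would tentatively attempt to handle each rational $\alpha = p/q = s-k \in (0,1)$ separately by means of the homomorphism $\Phi(A) := (\wedge^k A)^{\otimes(q-p)}\otimes(\wedge^{k+1}A)^{\otimes p}$ into $M_D(\mathbb{R})$, where $D = \binom{d}{k}^{q-p}\binom{d}{k+1}^{p}$: since $\|\Phi(A)\| = \|\wedge^k A\|^{q-p}\|\wedge^{k+1}A\|^{p} = \varphi^s(A)^q$, Theorem \ref{th:mpressure} applied to $\Phi_*\mu$ with exponent $1/q$ yields the conjectured inequality for that particular rational $\alpha$ with dimension $D$. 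Irrational $\alpha$ would then have to be recovered via a density/continuity argument combined with the persistence property above.

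The hard part is removing the dependence of $D$ on the denominator $q$. The tensor-power construction has $D \to \infty$ as $q \to \infty$, so it produces a sequence of inequalities indexed by $q$ rather than a single uniform statement with a fixed $n(d)$. Closing this gap seems to require either a multiplicative identity specific to the pair $(\wedge^k A,\wedge^{k+1}A)$ in intermediate dimensions (none appears to exist in general), a fixed-dimensional direct-sum construction whose operator norm realises the weighted geometric mean $\|\wedge^k A\|^{k+1-s}\|\wedge^{k+1}A\|^{s-k}$, or a genuinely new analytic input in place of Theorem \ref{th:mpressure}. This is almost certainly the obstruction anticipated in the author's own remark that the methods of this article do not suffice to prove Conjecture \ref{co:only} when $d > 2$.
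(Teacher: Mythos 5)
The statement you are asked to prove is \emph{Conjecture} \ref{co:only}: the paper does not prove it, and in fact the author explicitly remarks (just above the conjecture) that the methods of the article do not appear to suffice when $d>2$. Your proposal correctly recognises this and, rather than claiming a proof, gives an honest account of which ranges of $s$ yield to the available techniques and exactly where the argument breaks down. That assessment is accurate. Your treatment of rational $s=k+p/q$ via the homomorphism $A\mapsto (A^{\wedge k})^{\otimes(q-p)}\otimes(A^{\wedge(k+1)})^{\otimes p}$ into $M_D(\mathbb{R})$ with $D=\binom{d}{k}^{q-p}\binom{d}{k+1}^p$ is precisely the construction the paper records as Proposition \ref{pr:flight} and equation \eqref{eq:badness}, and your diagnosis — that $D$ grows without bound in $q$, so one obtains a family of inequalities indexed by the denominator rather than a single inequality with a fixed $n(d)$ — is exactly the obstruction the author has in mind.

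Two small additions in your proposal go slightly beyond what the paper states explicitly and are worth keeping in mind. First, your persistence observation (that submultiplicativity of $\varphi^s$ upgrades an inequality at level $n_0$ to level $n\geq n_0$ with the same constant) is correct and lets you unify the easy ranges under a single $n(d)=\binom{d}{\lfloor d/2\rfloor}$. Second, your handling of $s\in[d-1,d]$ by writing $\varphi^s(A)=\|\wedge^{d-1}A\|^{d-s}|\det A|^{s-d+1}$ and reweighting $\mu$ by $|\det|^{s-d+1}$ is a genuine generalisation of the $d=2$ argument in Theorem \ref{th:fpressure}; it shows that the troublesome range is only $s\in(k,k+1)$ with $0<k<d-1$, so the first genuinely open case is $d=3$, $s\in(1,2)$, as you say. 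None of this closes the gap — a fixed-dimensional construction whose norm realises the weighted geometric mean $\|\wedge^kA\|^{k+1-s}\|\wedge^{k+1}A\|^{s-k}$ is indeed what is missing — but your analysis of the problem is consistent with the paper's.
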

Using the methods of this article it is possible to achieve a weaker inequality of the form
\begin{equation}\label{eq:badness}\int \varphi^s(A)d\mu_{n(s,d)}(A) \leq M_{s,d}e^{\mathbf{P}(\mu,s)}\left(\int \varphi^s(A)d\mu(A)\right)^{n(s,d)-1}\end{equation}
for rational $s>0$, but the constants $n(s,d)$ and $M_{s,d}$ grow super-exponentially with the denominator of $s$: see Proposition \ref{pr:flight} below. We anticipate that some simplification of the proof of Theorem \ref{th:mcontinuous} might be achievable  via Conjecture \ref{co:only}.

\subsection{Zero temperature limits and the joint spectral radius}

We next turn our attention to the relationship of Theorem \ref{th:mpressure} with earlier work of J. Bochi. Given a compact set $\mathsf{A}\subset M_d(\mathbb{R})$, the joint spectral radius $\varrho_\infty(\mathsf{A})$ is defined to be the quantity
\[\varrho_\infty(\mathsf{A}):=\lim_{n \to \infty} \sup\left\{\left\|A_1\cdots A_n\right\|^{\frac{1}{n}}\colon A_i \in \mathsf{A}\right\}.\]
This limit exists (by subadditivity) and is independent of the choice of norm on $M_d(\mathbb{R})$. In the article \cite{Bo03}, J. Bochi proved that for each $d \geq 1$ there is a constant $\tilde{C}_d>0$ such that for all compact sets $\mathsf{A}\subset M_d(\mathbb{R})$,
\begin{equation}\label{eq:bochi}\sup_{A_1,\ldots,A_d \in \mathsf{A}}\left\|A_1\cdots A_d\right\| \leq \tilde{C}_d\varrho_\infty(\mathsf{A})\left(\sup_{A \in \mathsf{A}}\|A\|\right)^{d-1}.\end{equation}
Theorems \ref{th:mpressure} and \ref{th:fpressure} may be seen as versions of Bochi's inequality for the pressure functions $\mathbf{M}$ and $\mathbf{P}$. Indeed, we may obtain \eqref{eq:bochi} as a limit case of Theorem \ref{th:mpressure} via the following result:
\begin{theorem}\label{th:limit}
Let $\mathsf{A}\subset M_d(\mathbb{R})$ be compact, and let $\mu$ be a finite measure on $M_d(\mathbb{R})$ with support equal to $\mathsf{A}$. Then
\[\lim_{s \to \infty}e^{\mathbf{M}(\mu,s)/s} = \varrho_\infty(\mathsf{A}).\]
\end{theorem}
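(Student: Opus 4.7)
The plan is to prove $\mathbf{M}(\mu,s)/s \to \log \varrho_\infty(\mathsf{A})$ as $s \to \infty$ by matching upper and lower estimates. Write $M_n := \sup\{\|A_1 \cdots A_n\| : A_i \in \mathsf{A}\}$; by compactness of $\mathsf{A}$ the supremum is attained, and by submultiplicativity $M_n^{1/n} \to \varrho_\infty(\mathsf{A})$.

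The upper bound is essentially immediate from the infimum formula \eqref{eq:naff}. Since $\mu_n$ is a finite measure of mass $\mu(\mathsf{A})^n$ supported in $\{A_1 \cdots A_n : A_i \in \mathsf{A}\}$, one has $\int \|A\|^s\,d\mu_n \leq \mu(\mathsf{A})^n M_n^s$. Taking logarithms, dividing by $s$, sending $s \to \infty$, and finally infimising over $n$ yields $\limsup_{s\to\infty} \mathbf{M}(\mu,s)/s \leq \log \varrho_\infty(\mathsf{A})$.

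For the lower bound I would apply Theorem \ref{th:mpressure} to $\mu_n$ in place of $\mu$. The identities $(\mu_n)_d = \mu_{nd}$ and $\mathbf{M}(\mu_n,s) = n\mathbf{M}(\mu,s)$ allow \eqref{eq:blahcunt} to be rearranged as
\[\frac{\mathbf{M}(\mu,s)}{s} \,\geq\, \frac{1}{ns}\log\!\int\!\|A\|^s\,d\mu_{nd}(A) \,-\, \frac{d-1}{ns}\log\!\int\!\|A\|^s\,d\mu_n(A) \,-\, \frac{\log K_{d,s}}{ns}.\]
Then send $s \to \infty$ using the elementary fact that $\bigl(\int\!\|A\|^s\,d\nu\bigr)^{1/s} \to \sup\{\|A\| : A \in \text{supp}\,\nu\}$ for any finite Borel measure $\nu$ of compact support on $M_d(\mathbb{R})$, together with the asymptotic $\log K_{d,s}/s \to (d+1)\log d$. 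Letting $n \to \infty$ afterwards, and using $\log M_{nd}/n \to d\log\varrho_\infty(\mathsf{A})$ and $(d-1)\log M_n/n \to (d-1)\log\varrho_\infty(\mathsf{A})$, the right-hand side converges to $\log \varrho_\infty(\mathsf{A})$. The degenerate case $\varrho_\infty(\mathsf{A}) = 0$ requires no separate treatment since the upper bound alone forces $\mathbf{M}(\mu,s)/s \to -\infty$.

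The main obstacle, and the reason the hypothesis $\text{supp}\,\mu = \mathsf{A}$ is needed, is to justify the correct target in the $L^s \to L^\infty$ passage, namely that $\bigl(\int \|A\|^s d\mu_n\bigr)^{1/s} \to M_n$. This reduces to checking that any maximiser $A_1^*\cdots A_n^*$ of the product over $\mathsf{A}^n$ lies in $\text{supp}\,\mu_n$, which in turn follows because continuity of matrix multiplication pulls back every neighbourhood of $A_1^*\cdots A_n^*$ to an open set in $M_d(\mathbb{R})^n$ containing $(A_1^*,\ldots,A_n^*)$, and each factor neighbourhood carries positive $\mu$-mass by the full-support assumption.
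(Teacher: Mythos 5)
Your proof is correct and follows essentially the same route as the paper's: the upper bound from the subadditive infimum \eqref{eq:naff}, and the lower bound by applying Theorem \ref{th:mpressure} to $\mu_n$ and sending $s\to\infty$ then $n\to\infty$, using $K_{d,s}^{1/s}\to d^{d+1}$ and the $L^s\to L^\infty$ convergence made valid by $\mathrm{supp}\,\mu=\mathsf{A}$.
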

If $\mathsf{A}\subset M_d(\mathbb{R})$ is compact and $\mu$ is a probability measure with support equal to  $\mathsf{A}$, then taking the power $\frac{1}{s}$ in \eqref{eq:blahcunt} and applying Theorem \ref{th:mpressure} we trivially recover Bochi's inequality \eqref{eq:bochi} with $\tilde{C}_d:=d^{d+1}$. We remark that a version of Theorem \ref{th:limit} was previously obtained by Y. Guivarc'h and \'{E}. Le Page in the case where $\mu$ is supported on the set of invertible matrices, and where a strong irreducibility condition is satisfied by the semigroup generated by the support of $\mu$, as part of a general operator-theoretic investigation of Lyapunov exponents under those assumptions: see \cite[Th\'eor\`eme 4.17]{GuLe04}. A special case of Theorem \ref{th:limit} has also been given by M. Ogura and C. F. Martin under positivity and non-singularity assumptions on the support of the measure \cite{OgMa14}.  Our proof is an elementary application of Theorem \ref{th:mpressure} and makes no assumptions of reducibility, invertibility, non-singularity or positivity. Results of this type may be viewed as an extension to matrices of the ``zero-temperature limit'' in thermodynamic formalism: for some results in the $1$-dimensional case we note for example \cite{BaLeLo12,Br03,ChHo10}.

\subsection{Implications for the computation of the pressure}\label{ss:two}
The results of this article have some theoretical implications for the computation of  $\mathbf{M}$ and $\mathbf{P}$. Theorem \ref{th:mpressure} allows us to show that the algorithmic computation of $\mathbf{M}$, while potentially difficult, is easier than the computation of Lyapunov exponents in the following precise sense: there exists an algorithm which is guaranteed to compute the norm pressure of a finite set of matrices $A_1,\ldots,A_N$ at a given parameter $s>0$ to within a specified precision $\varepsilon>0$ in a finite amount of time. In the context of the $p$-radius (which is equivalent to the norm pressure via \eqref{eq:pradius}) a need for such algorithms was recently highlighted by R. Jungers and V. Protasov (\cite[\S6]{JuPr11} and \cite{Pr10}). Indeed, V. Protasov has shown in \cite{Pr10} that a finite set of matrices $A_1,\ldots,A_N\in M_d(\mathbb{R})$ preserves a cone in $\mathbb{R}^d$ if and only if $\varrho_1(A_1,\ldots,A_N)=\rho(\sum_{i=1}^N A_i)$, and this provides further motivation for the problem of computing $\mathbf{M}$.

Given $\varepsilon,s>0$ and $A_1,\ldots,A_N \in M_d(\mathbb{R})$, let $\mu:=\sum_{i=1}^N\delta_{A_i}$. By Theorem \ref{th:mpressure} the pressure $\mathbf{M}(\mu,s)$ is equal to $-\infty$ if and only if $\int \|A\|^sd\mu_d(A)=0$, which is to say if and only if all of the products $A_{i_1}\cdots A_{i_d}$ with $1 \leq i_1,\ldots,i_d \leq N$ are equal to the zero matrix. If this is not the case, then we observe that by Theorem \ref{th:mpressure} we have for all $n \geq 1$
\[\left(\frac{\int \|A\|^s d\mu_{nd}(A)}{K_{s,d}\left(\int \|A\|^sd\mu_n(A)\right)^{d-1}}\right)^{\frac{1}{n}} \leq e^{\mathbf{M}(\mu,s)}\leq \left( \int\|A\|^sd\mu_n(A)\right)^{\frac{1}{n}}\]
and as $n \to \infty$ both the left-hand and right-hand terms converge to the middle term. It follows that if we compute these expressions by brute force for an increasing sequence of values of $n$, we must after a finite amount of computation necessarily arrive at an integer $n$ such that
\[\frac{1}{n}\log \int \|A\|^sd\mu_n(A) -\frac{1}{n}\log\left(\frac{\int \|A\|^s d\mu_{nd}(A)}{K_{s,d}\left(\int \|A\|^sd\mu_n(A)\right)^{d-1}}\right) < \varepsilon\]
and for this integer $n$ we have
\[\left|\mathbf{M}(\mu,s)-\frac{1}{n}\log \int \|A\|^sd\mu_n(A)\right|<\varepsilon\]
as desired. 
In view of Theorem \ref{th:fpressure} it is clear that an analogous algorithm exists for the estimation of the singular value pressure $\mathbf{P}(\mu,s)$ in two dimensions for all values of $s$, and using \eqref{eq:badness} this extends to the computation of $\mathbf{P}(\mu,s)$ in arbitrary dimensions when $s>0$ is rational. Since $s \mapsto \mathbf{P}(\mu,s)$ is monotone, this implies that $\mathbf{P}(\mu,s)$ can be computed (perhaps laboriously) for any computable value of $s$ by taking successive upper and lower rational approximations to $s$.  If Conjecture \ref{co:only} is valid then the algorithm for the approximation of $\mathbf{M}(\mu,s)$ extends directly to the computation of $\mathbf{P}(\mu,s)$ in all dimensions.  The efficiency of these approximation procedures seems likely to be low, but this situation nonetheless contrasts strongly with that for the top Lyapunov exponent of the matrices $A_1,\ldots,A_N$,
\[\Lambda(A_1,\ldots,A_N):=\lim_{n \to \infty} \frac{1}{N^n} \sum_{i_1,\ldots,i_n=1}^N \log \left\|A_{i_1}\cdots A_{i_n}\right\|,\]
since it is known that an algorithm which is guaranteed to always estimate the Lyapunov exponent $\Lambda(A_1,\ldots,A_N)$ to within a prescribed error of $\varepsilon$ in a finite number of steps cannot exist, even if $N=2$ and the matrices are assumed to have integer entries \cite[Theorem 2]{TsBl97}. The successful development of efficient algorithms for the joint spectral radius, and in some cases the $p$-radius -- see for example \cite{BlNe05,GuPr13,JuPr11,OgPrJu15,PrJuBl09} -- suggests that more efficient algorithms for the computation of $\mathbf{M}$ and $\mathbf{P}$ are likely to exist.

\subsection{Implications for the computation of the affinity dimension}
The results in this article also yield a procedure for the rigorous estimation of the affinity dimension $\mathfrak{s}(A_1,\ldots,A_N)$ to within a prescribed error $\varepsilon>0$ when $A_1,\ldots,A_N \in M_d(\mathbb{R})$ have norm strictly less than one. The algorithm which we will present seems likely to be too inefficient for reasonable practical use, but we nonetheless consider it interesting since the existence of a general algorithm of this kind does not seem to have been previously suspected. We observe firstly that $\mathfrak{s}(A_1,\ldots,A_N) \geq d$ if and only if
\[\mathbf{P}(\mu,d) =\log \int  |\det A|d\mu(A) \geq 0,\]
and in this case $\mathfrak{s}(A_1,\ldots,A_N)$ is given by the unique value of $s$ which solves
\[\mathbf{P}(\mu,s)=\log\int  |\det A|^{s/d}d\mu(A)=0,\]
an equation which may be solved by elementary methods. The interesting cases of the problem therefore occur only when $\mathfrak{s}(A_1,\ldots,A_N) \in [0,d]$, and we shall assume henceforth that this is known to hold. To obtain the desired algorithm it is sufficient to be able to solve the following problem in a finite number of steps: given that $\mathfrak{s}(A_1,\ldots,A_N)$ belongs to a closed interval $I$ with rational endpoints, find a subinterval $J$ of $I$ which contains $\mathfrak{s}(A_1,\ldots,A_N)$, has rational endpoints, and has length at most two thirds that of $I$. By applying this procedure iteratively starting with the interval $[0,d]$ it is clear that we may obtain an estimate of $\mathfrak{s}(A_1,\ldots,A_N)$ to within an \emph{a priori} prescribed accuracy in finitely many steps.

As before let us for convenience write $\mu:=\sum_{i=1}^N\delta_{A_i}$. The function $s \mapsto \mathbf{P}(\mu,s)$ may easily be shown to be strictly decreasing except possibly on an interval of the form $(k,+\infty)$ where it takes the value $-\infty$, see \S\ref{se:cuntcunt} below. Given the interval $I=[s_1,s_2]$ let $t_1:=\frac{2}{3}s_1+\frac{1}{3}s_2$ and $t_2:=\frac{1}{3}s_1+\frac{2}{3}s_2$.  Using \eqref{eq:badness} there exist easily-computable constants $d_1,d_2,M_1,M_2 \gg 1$ depending only on the numerator, denominator and integer parts of $t_1$ and $t_2$ and on $d$ such that for all $n \geq 1$
\[\left(\frac{\int \varphi^{t_i}(A) d\mu_{nd_i}(A)}{M_i\left(\int \varphi^{t_i}(A)d\mu_n(A)\right)^{d_i-1}}\right)^{\frac{1}{n}} \leq e^{\mathbf{P}(\mu,{t_i})}\leq \left( \int\varphi^{t_i}(A)d\mu_n(A)\right)^{\frac{1}{n}}\]
for $i=1,2$, and furthermore both the left and right-hand terms converge to the middle term in the limit as $n \to \infty$. By the strictly decreasing property of $\mathbf{P}$ the two values $\mathbf{P}(\mu,t_1)$ and $\mathbf{P}(\mu,t_2)$ are not both equal to zero, and therefore there exists $i \in\{1,2\}$ such that either
\begin{equation}\label{eq:more} \int\varphi^{t_i}(A)d\mu_n(A)<1\end{equation}
for all sufficiently large $n$ (by the convergence of the upper estimate) or 
\begin{equation}\label{eq:evenmore}\frac{\int \varphi^{t_i}(A) d\mu_{nd_i}(A)}{M_i\left(\int \varphi^{t_i}(A)d\mu_n(A)\right)^{d_i-1}}>1\end{equation}
for all sufficiently large $n$ (by the convergence of the lower estimate). If the former holds for at least one integer $n$ then necessarily $\mathbf{P}(\mu,t_i)<0$ and therefore $\mathfrak{s}(A_1,\ldots,A_N) \in J:=[s_1,t_i]$; if the latter holds for some integer $n$ then $\mathbf{P}(\mu,t_i)>0$ and therefore $\mathfrak{s}(A_1,\ldots,A_N) \in J:=[t_i,s_2]$. Clearly, by brute-force computation of the expressions $\int\varphi^{t_i}(A)d\mu_n(A)$ and $\int \varphi^{t_i}(A) d\mu_{nd_i}(A)/M_i\left(\int \varphi^{t_i}(A)d\mu_n(A)\right)^{d_i-1}$ for $i=1,2$ for increasing values of $n$ we may after a finite number of computations arrive at an integer $n \geq 1$ and an integer $i \in \{1,2\}$ such that one of the inequalities \eqref{eq:more}, \eqref{eq:evenmore} is satisfied, and once such an inequality is obtained the desired interval $J$ is known. This completes the description of the algorithm.

We remark that while the above observation is encouraging, it may yet be the case that problems such as determining whether or not a given set of rational matrices $A_1,\ldots,A_N$ satisfies $\mathfrak{s}(A_1,\ldots,A_N)\geq \frac{1}{2}$ are algorithmically undecidable: indeed, that particular problem is equivalent to that of determining whether or not the quantitiy $\mathbf{P}(\sum_{i=1}^N \delta_{A_i},\frac{1}{2})=\mathbf{M}(\sum_{i=1}^N \delta_{A_i},\frac{1}{2})=\log(N \varrho_{1/2}(A_1,\ldots,A_N)^{1/2})$ is strictly positive, and the closely-related problem of determining whether or not $\varrho_\infty(\{A_1,\ldots,A_N\})>1$ is known to be algorithmically undecidable \cite{BlTs00}.

\subsection{Overview of the remainder of the article}

The proof of Theorem \ref{th:mpressure} is related to the proof of Bochi's inequality \eqref{eq:bochi}, which may be regarded as consisting of two essential steps: firstly, one shows that for any nonempty compact set $\mathsf{A}\subset M_d(\mathbb{R})$,
\begin{equation}\label{eq:b1}\inf_{B \in GL_d(\mathbb{R})} \sup_{A \in \mathsf{A}}\left\|BAB^{-1}\right\| \leq C \varrho_\infty(\mathsf{A}) \end{equation}
for some constant $C$ depending only on $d$; and secondly, one shows that for any product $A_1\cdots A_d$ of $d \times d$ matrices and any $B \in GL_d(\mathbb{R})$, 
\begin{equation}\label{eq:b2}\|A_1\cdots A_d\| \leq C \max_{1 \leq i \leq d}\left(\left\|A_i\right\| \cdot  \prod_{\substack{ 1 \leq k \leq d\\k \neq i}} \left\|BA_kB^{-1}\right\|\right)\end{equation}
for a second constant $C$ depending only on $d$. The inequality \eqref{eq:bochi} follows by the combination of these two results. 
The proof of Theorem \ref{th:mpressure} admits the same structure as that of Bochi's inequality: in the first part we prove an analogue of \eqref{eq:b1} for the norm pressure $\mathbf{M}$, and the second part adapts the inequality \eqref{eq:b2}. The principal effort of this proof lies in the adaptation of \eqref{eq:b1} to the norm pressure $\mathbf{M}$, which is given in Proposition \ref{pr:gldr-bound} below; the analogue of \eqref{eq:b2} is relatively straightforward. In \S\ref{se:two} we give the proof of Theorem \ref{th:mpressure}, which is the longest proof in this paper. Sections \ref{se:three} through \ref{se:last} provide respectively the proofs of Theorems \ref{th:mcontinuous}--\ref{th:limit}. The proof of Theorem \ref{th:fcontinuous} is somewhat involved, but the remaining proofs are all extremely brief.

%%%%%%%%%%%%%%%%%%%%%%%%%%%%%%%%%%%%%%%%%%%
% NEW SECTION
%%%%%%%%%%%%%%%%%%%%%%%%%%%%%%%%%%%%%%%%%%%

%This section ok!
\section{Proof of Theorem \ref{th:mpressure}}\label{se:two}
As was indicated previously we begin by proving a lower bound for $\mathbf{M}(\mu,s)$ in terms of distorted Euclidean norms:
\begin{proposition}\label{pr:gldr-bound}
Let $s>0$ and let $\mu $ be a Borel probability measure on $M_d(\mathbb{R})$ such that $\int \|A\|^sd\mu(A)<\infty$. Then
\begin{equation}\label{eq:prp}\inf_{B \in GL_d(\mathbb{R})} \int  \left\|BAB^{-1}\right\|^sd\mu(A) \leq C_{d,s}e^{\mathbf{M}(\mu,s)}\end{equation}
where $C_{d,s}:=\max\{d^2,d^{1+s}\}$.
\end{proposition}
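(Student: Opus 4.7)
My plan is to adapt J.~Bochi's inductive proof of \eqref{eq:b1}, working by induction on $d$. The base case $d=1$ is trivial: conjugation acts trivially on scalars, so $\inf_{B}\int |BAB^{-1}|^s\,d\mu(A)=\int|A|^sd\mu(A)=e^{\mathbf{M}(\mu,s)}$. For the inductive step I would invoke the usual dichotomy: either the semigroup $\mathcal{S}_\mu$ generated by $\mathrm{supp}(\mu)$ admits a common invariant proper subspace (the reducible case) or it does not (the irreducible case).

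In the \emph{reducible case}, choose a basis adapted to the invariant subspace so that each $A\in\mathrm{supp}(\mu)$ is block upper-triangular with diagonal blocks $A^{(1)}, A^{(2)}$ of dimensions $k$ and $d-k$. Conjugating by $B_t:=\mathrm{diag}(tI_k, I_{d-k})$ and letting $t\to 0^+$ scales the off-diagonal block by $t$ while leaving the diagonal blocks unchanged, so by dominated convergence
\[\inf_{B\in GL_d(\mathbb{R})}\int\|BAB^{-1}\|^s\,d\mu(A)\leq\int\|A^{(1)}\oplus A^{(2)}\|^s\,d\mu(A).\]
The operator norm of a block-diagonal matrix equals the larger of its two block norms (bounded above by their sum). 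Since products of block upper-triangular matrices keep the block form with diagonal blocks given by products of the corresponding factors, the pushforward measures $\nu_1,\nu_2$ on $M_{d_i}(\mathbb{R})$ satisfy $\mathbf{M}(\nu_i,s)\leq\mathbf{M}(\mu,s)$. Applying the induction hypothesis to each $\nu_i$, and then conjugating independently inside each block, yields the claim with a constant governed by those in lower dimensions.

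In the \emph{irreducible case}, the strategy is to find an optimal inner product. Parametrise similarity classes by the positive definite matrix $Q:=B^TB$ normalised so that $\det Q=1$, and set $g(Q):=\int\|A\|_Q^s\,d\mu(A)$, where $\|A\|_Q$ is the operator norm with respect to $\langle\cdot,\cdot\rangle_Q$. The function $g$ should be shown to be proper on $\{Q\in\mathrm{PD}_d:\det Q=1\}$: if $Q_n$ were to degenerate, the directions in which it concentrates would pick out a proper subspace $V$; since $\mathcal{S}_\mu$ has no such invariant subspace, some $A\in\mathrm{supp}(\mu)$ must map a vector of large $Q_n$-norm in $V$ to one that is still large, forcing $g(Q_n)\to\infty$. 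Hence the infimum is attained at some $Q_*$. Combined with the elementary inequality
\[\|A\|_{Q_*}^s\leq\max\{d^{s/2-1},1\}\sum_{i=1}^d\|Ae_i\|_{Q_*}^s\]
(from $\|A\|\leq\|A\|_F$ and a power-mean bound) applied to a $Q_*$-orthonormal basis $\{e_i\}$, together with an upper bound of the form $\int\|Ae_i\|_{Q_*}^s\,d\mu(A)\leq C\,e^{\mathbf{M}(\mu,s)}$ for every $i$, one recovers $g(Q_*)\leq C_{d,s}\,e^{\mathbf{M}(\mu,s)}$.

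The main obstacle will be the last per-direction estimate in the irreducible case. The natural route is to construct an ``extremal'' positively homogeneous function $N$ of degree $s$ satisfying $\int N(Av)\,d\mu(A)=e^{\mathbf{M}(\mu,s)}\,N(v)$ for every $v$, via a Perron--Frobenius / Krein--Rutman argument for the positive operator $(Tf)(v):=\int f(Av)\,d\mu(A)$ acting on the cone of continuous non-negative positively homogeneous functions of degree $s$; irreducibility is needed to guarantee that the eigenfunction $N$ is comparable to $\|\cdot\|^s$ from both above and below. One then chooses $Q_*$ via a John-ellipsoid argument applied to the sublevel set $\{N\leq 1\}$ (suitably symmetrised), losing a distortion factor of $\sqrt d$, after which the eigenfunction identity $\int N(Av)\,d\mu(A)=e^{\mathbf{M}(\mu,s)}N(v)$ transfers to the desired bound. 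Correctly tracking the constants through the John step (contributing $d^{s/2}$), the power-mean step, and the summation over $d$ basis vectors should deliver the advertised $C_{d,s}=\max\{d^2,d^{1+s}\}$.
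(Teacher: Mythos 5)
Your proposal takes a genuinely different route from the paper, and it contains a gap that would be hard to close; let me address the difference first, then the gap.

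The paper does \emph{not} argue by induction on $d$ and does \emph{not} invoke a reducible/irreducible dichotomy. Instead it constructs, for every $\varepsilon>0$ and with no hypothesis on $\mathrm{supp}(\mu)$, an explicit \emph{$\varepsilon$-approximate extremal norm} by summing a geometric series:
\[\|v\|_\varepsilon:=\left(\|v\|^s+\sum_{n=1}^\infty \left(e^{\mathbf{M}(\mu,s)}+\varepsilon\right)^{-n}\int\|Av\|^s\,d\mu_n(A)\right)^{1/s}\]
(with an analogous ``$s$-homogeneous $F$-norm'' when $0<s<1$). A one-line calculation shows $\int\|Av\|_\varepsilon^s\,d\mu(A)\leq(e^{\mathbf{M}(\mu,s)}+\varepsilon)\|v\|_\varepsilon^s$ for all $v$, and the series converges because $\int\|A\|^s\,d\mu_n(A)^{1/n}\to e^{\mathbf{M}(\mu,s)}$. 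John's theorem then produces $B\in GL_d(\mathbb{R})$ with $\|v\|_\varepsilon\asymp\|Bv\|$ up to a factor $\sqrt d$, and Lemma~\ref{le:sup-inside} lets one pass from per-direction bounds to the operator norm. Letting $\varepsilon\to0$ gives \eqref{eq:prp}. The entire argument is at most a page and never needs to know whether the semigroup is irreducible. What you lose by tolerating the $\varepsilon$ is an exact extremal norm; what you gain is a completely uniform construction.

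The gap in your proposal is the irreducible case. You want to run a Perron--Frobenius / Krein--Rutman argument for the operator $(Tf)(v)=\int f(Av)\,d\mu(A)$ on the cone of continuous non-negative positively homogeneous functions of degree $s$ (equivalently, on $C(S^{d-1})$ after restriction). Krein--Rutman needs compactness of $T$, and this fails in exactly the case the paper most cares about: when $\mu=\sum_i\delta_{A_i}$ is a finite sum of Diracs, $T$ is a finite linear combination of composition operators $f\mapsto f\circ A_i$, which are not compact on $C(S^{d-1})$. One can substitute a Schauder/Kakutani fixed-point argument on a compact convex family of norms (this is essentially how Barabanov norms are produced, and how \cite{Pr08} obtains the $\varepsilon=0$ version of Lemma~\ref{le:exists-norm} under irreducibility), but that requires a separate compactness/normalisation argument and a nontrivial step showing that the resulting extremal function is comparable to $\|\cdot\|^s$ from both sides; you sketch this but do not supply it, and it is the hard part. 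You would also need to rule out degenerate limits of $Q_n$ more carefully when $\mathrm{supp}(\mu)$ contains singular matrices. Your reducible step, by contrast, is essentially sound (the degeneration by $\mathrm{diag}(tI,I)$, the bound $\mathbf{M}(\nu_i,s)\leq\mathbf{M}(\mu,s)$, and the subadditivity $k^{1+s}+(d-k)^{1+s}\leq d^{1+s}$ all work), but without the irreducible case it does not close the induction. The moral is that the paper's series construction dissolves precisely the difficulty your plan gets stuck on.
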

A number of lemmas will be required in order for us to prove this result. The proof of Proposition \ref{pr:gldr-bound} splits into two similar but not identical cases according to whether or not $s$ is strictly less than $1$. The case $s \geq 1$ involves the construction of a family of special norms on $\mathbb{R}^d$ associated to the measure $\mu$ and parameter $s$. The case $0<s<1$ is similar, but requires an object slightly different to a norm:
\begin{definition}\label{de:fnorm}
We define an \emph{$s$-homogenous $F$-norm on $\mathbb{R}^d$}, where $0 <s <1$, to be a function $|\cdot| \colon \mathbb{R}^d \to \mathbb{R}$ such that:
\begin{enumerate}[(i)]
\item
$|u|\geq 0$ for all $u \in \mathbb{R}^d$, with $|u|=0$ if and only if $u=0$
\item
$|u+v|\leq |u|+|v|$ for all $u,v \in\mathbb{R}^d$
\item
$|\lambda u|=|\lambda|^s|u|$ for all $u \in \mathbb{R}^d$ and $\lambda \in \mathbb{R}$.
\end{enumerate}
\end{definition}
For $0<s<1$ the function $|(x_1,\ldots,x_d)|:=\sum_{i=1}^d|x_i|^s$ can be seen to be an $s$-homogenous $F$-norm by virtue of the inequality $(x+y)^s \leq x^s+y^s$ which is valid for all $x,y \geq 0$ and $s \in (0,1)$. 
We observe that an $s$-homogenous $F$-norm $|\cdot|$ is $s$-H\"older continuous with respect to the usual distance on $\mathbb{R}^d$: if $e_1,\ldots,e_d$ denotes the standard basis, then
\[\left|\left|u+\sum_{i=1}^d \lambda_i e_i\right| -\left|u\right|\right| \leq \left|\sum_{i=1}^d\lambda_i e_i\right| \leq \sum_{i=1}^d \left|\lambda_i\right|^s|e_i| = O\left(\left\|\sum_{i=1}^d \lambda_i e_i\right\|^s\right).\]
In particular the set $\{v \in \mathbb{R}^d \colon |v|\leq 1\}$ is closed, and by (i) it contains an open neighbourhood of the origin; however, it will in general not be convex. 

 In the case where $s \geq 1$ and $\mu$ is the sum of finitely many Dirac measures the following lemma was previously given by Cabrelli, Heil, and Molter \cite[Proposition 2.17]{CaHeMo04}. If additionally the matrices in the support of $\mu$ do not have a common invariant subspace then the conclusion holds with $\varepsilon=0$, see \cite{Pr08}; under stronger irreducibility conditions and when $\mu$ is supported on the set of invertible matrices, we note the related result \cite[Th\'eor\`eme 4.1]{GuLe04}. The analogue of Lemma \ref{le:exists-norm} for the joint spectral radius dates back to 1960 (\cite{RoSt60}, reprinted in \cite{Ro03}).
\begin{lemma}\label{le:exists-norm}
Let $\mu$ be a measure on $M_d(\mathbb{R})$, and let $s>0$ such that $\int \|A\|^sd\mu(A)<\infty$. Let $\varepsilon>0$. If $0<s<1$, then there exists an $s$-homogenous $F$-norm $|\cdot|_\varepsilon$ on $\mathbb{R}^d$ such that for all $v \in \mathbb{R}^d$
\begin{equation}\label{eq:extr}\int |Av|_\varepsilon d\mu(A) \leq \left(e^{\mathbf{M}(\mu,s)}+\varepsilon\right)|v|_\varepsilon.\end{equation}
If $s \geq 1$, then there exists a norm $\|\cdot\|_\varepsilon$ on $\mathbb{R}^d$ such that for all $v \in \mathbb{R}^d$
\begin{equation}\label{eq:extr2}\int \|Av\|_\varepsilon^sd\mu(A) \leq \left(e^{\mathbf{M}(\mu,s)}+\varepsilon\right)\|v\|_\varepsilon^s.\end{equation}
\end{lemma}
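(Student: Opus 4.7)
The plan is to construct the extremal (F-)norm as a suitably weighted generating series, in the classical style of the Rosenthal--Strong extremal norm for the joint spectral radius. Set $\lambda := e^{\mathbf{M}(\mu,s)} + \varepsilon$ and, writing $\mu_{0} := \delta_{I}$, define
\[
\Psi(v) := \sum_{n=0}^{\infty} \lambda^{-n}\int \|Av\|^{s}\,d\mu_{n}(A),\qquad v \in \mathbb{R}^{d}.
\]
The first step is to verify convergence: by \eqref{eq:naff}, $\frac{1}{n}\log\int\|A\|^{s}d\mu_{n}(A) \to \mathbf{M}(\mu,s)$, so for any $\lambda' \in (e^{\mathbf{M}(\mu,s)},\lambda)$ one has $\int\|A\|^{s}d\mu_{n}(A) \leq (\lambda')^{n}$ for all large $n$. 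Combined with the crude bound $\int \|Av\|^{s}d\mu_{n}(A) \leq \|v\|^{s}\int \|A\|^{s}d\mu_{n}(A)$, this produces an eventually geometric majorant with ratio $\lambda'/\lambda < 1$, so $\Psi(v) < \infty$ for every $v \in \mathbb{R}^{d}$.

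The central computation is the identity $\int \Psi(Av)\,d\mu(A) = \lambda\bigl(\Psi(v) - \|v\|^{s}\bigr)$, obtained by applying Fubini to swap the sum and the outer integral, and then using the very definition of $\mu_{n+1}$ as the push-forward of $\mu_{n}\times \mu$ under matrix multiplication, which collapses $\int\!\!\int \|BAv\|^{s}d\mu_{n}(B)d\mu(A)$ to $\int \|Cv\|^{s}d\mu_{n+1}(C)$; reindexing the sum then produces $\lambda\Psi(v)$ minus the missing $n = 0$ term. Dropping $\|v\|^{s}$ immediately yields the desired dominant inequality.

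For the case $0 < s < 1$ I would simply take $|\cdot|_{\varepsilon} := \Psi$. Positivity and nondegeneracy hold because the $n = 0$ summand is $\|v\|^{s}$; $s$-homogeneity follows from that of the Euclidean norm; and the triangle inequality reduces term-by-term to the pointwise estimates $\|A(u+v)\|^{s} \leq (\|Au\|+\|Av\|)^{s} \leq \|Au\|^{s} + \|Av\|^{s}$, where the second step uses subadditivity of $t \mapsto t^{s}$ for $s \in (0,1)$. The estimate \eqref{eq:extr} is then precisely the central computation.

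In the case $s \geq 1$ I would set $\|v\|_{\varepsilon} := \Psi(v)^{1/s}$. The principal obstacle here is the triangle inequality, which I would establish by two successive applications of Minkowski's inequality: first in each $L^{s}(\mu_{n})$ to show that $v \mapsto \bigl(\int \|Av\|^{s}d\mu_{n}(A)\bigr)^{1/s}$ is a seminorm on $\mathbb{R}^{d}$, and then in the weighted $\ell^{s}$-sum indexed by $n$ to assemble these seminorms into a single norm on $\mathbb{R}^{d}$. Positivity is again guaranteed by the $n=0$ term, and homogeneity is clear. The bound \eqref{eq:extr2} then reads $\int \|Av\|_{\varepsilon}^{s}d\mu(A) = \int \Psi(Av)d\mu(A) \leq \lambda \Psi(v) = \lambda \|v\|_{\varepsilon}^{s}$, completing the proposed proof.
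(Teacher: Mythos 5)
Your construction is precisely the one the paper uses: the weighted generating series with weight $\lambda^{-n}=(e^{\mathbf{M}(\mu,s)}+\varepsilon)^{-n}$, taken raw for $0<s<1$ and raised to the power $1/s$ for $s\geq 1$, with the triangle inequality handled term-by-term via subadditivity of $t\mapsto t^s$ in the first case and via nested Minkowski inequalities in $L^s(\mu_n)$ and $\ell^s(\mathbb{N})$ in the second. Your bookkeeping convention $\mu_0:=\delta_I$ (so the $n=0$ term supplies $\|v\|^s$ automatically) and the explicit telescoping identity $\int\Psi(Av)\,d\mu(A)=\lambda(\Psi(v)-\|v\|^s)$ are cosmetic tidyings of what the paper calls ``a simple calculation,'' not a different route.
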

\begin{proof}
Suppose first that $0<s<1$.  For each $v \in \mathbb{R}^d$ let us define
\[|v|_\varepsilon:=\|v\|^s+\sum_{n =1}^\infty \left(e^{\mathbf{M}(\mu,s)}+\varepsilon\right)^{-n} \int \|A\|^sd\mu_n(A).\]
The convergence of the series is immediate from the definition of $\mathbf{M}(\mu,s)$, and the fact that $|\cdot|_\varepsilon$ is an $s$-homogenous $F$-norm follows from the fact that $\|\cdot\|$ is a norm and from the elementary inequality $|x+y|^s \leq |x|^s+|y|^s$. The verification of \eqref{eq:extr} is a simple calculation. In the case $s \geq 1$ we instead define
\[\|v\|_\varepsilon:=\left(\|v\|^s+\sum_{n =1}^\infty \left(e^{\mathbf{M}(\mu,s)}+\varepsilon\right)^{-n} \int \|Av\|^sd\mu_n(A)\right)^{\frac{1}{s}}.\]
The convergence of the series is again immediate. The triangle inequality for ${\|\cdot\|_\varepsilon}$ follows from the triangle inequalities for $\|\cdot\|$, $L^s(\mu_n)$ and $\ell^s(\mathbb{N})$. The homogeneity of $\|\cdot\|_\varepsilon$ and the inequality \eqref{eq:extr2} are obvious.
\end{proof}
In the case where $0<s<1$, the following geometric lemma allows us to relate the unit ball of an $s$-homogenous $F$-norm to that of a linearly distorted Euclidean norm:
\begin{lemma}\label{le:exists-b}
Let $0<s<1$ and suppose that $|\cdot|$ is an $s$-homogenous $F$-norm on $\mathbb{R}^d$. Then there exists $B \in GL_d(\mathbb{R})$ such that $d^{s-1} |v|\leq \|Bv\|^s \leq d^{\frac{s}{2}} |v|$ for all $v \in \mathbb{R}^d$. 
\end{lemma}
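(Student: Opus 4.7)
The plan is to sandwich the non-convex unit ball $K:=\{v\in\mathbb{R}^d:|v|\le 1\}$ of the $F$-norm between two Euclidean balls by passing through its convex hull and applying John's theorem.

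First I would establish the geometry of $K$. Axioms (i) and (iii) make $K$ closed, bounded, and symmetric ($|-v|=|v|$ via (iii) with $\lambda=-1$), and (iii) also gives star-shapedness about the origin: if $v\in K$ and $|t|\le 1$ then $|tv|=|t|^s|v|\le 1$. In particular $K$ is connected and contains a neighbourhood of $0$. Hence $L:=\mathrm{conv}(K)$ is a symmetric convex body, and by John's theorem (for centrally symmetric bodies) there exists $B\in GL_d(\mathbb{R})$ carrying a John ellipsoid of $L$ to the Euclidean unit ball $\mathbf{B}_2^d$, so that
\[\mathbf{B}_2^d\subseteq B(L)\subseteq \sqrt{d}\,\mathbf{B}_2^d.\]

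The upper bound is immediate: if $|v|\le 1$ then $v\in K\subseteq L$, hence $\|Bv\|\le\sqrt d$. Homogenising (replace $v$ by $|v|^{-1/s}v$) yields $\|Bv\|^s\le d^{s/2}|v|$ for all $v\in\mathbb{R}^d$.

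For the lower bound I would argue the reverse direction. If $\|Bv\|\le 1$ then $v\in B^{-1}(\mathbf{B}_2^d)\subseteq L$, so it suffices to prove the sublemma: $v\in L\Rightarrow |v|\le d^{1-s}$. Since $K$ is connected, the Fenchel--Bunt sharpening of Carath\'eodory's theorem lets us write any $v\in L$ as a convex combination $v=\sum_{i=1}^{d}\lambda_i v_i$ with $v_i\in K$, $\lambda_i\ge 0$, $\sum\lambda_i=1$ (rather than the $d+1$ points given by plain Carath\'eodory). Then subadditivity and $s$-homogeneity of $|\cdot|$ give
\[|v|\le\sum_{i=1}^d|\lambda_i v_i|=\sum_{i=1}^d\lambda_i^s|v_i|\le\sum_{i=1}^d\lambda_i^s\le d\cdot(1/d)^s=d^{1-s},\]
where the last inequality uses concavity of $x\mapsto x^s$ on $[0,1]$ (Jensen). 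Homogenising again gives $d^{s-1}|v|\le\|Bv\|^s$, completing the proof.

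The main obstacle is the lower bound constant: it is tempting to use ordinary Carath\'eodory with $d+1$ summands, which would give the slightly weaker estimate $(d+1)^{1-s}$. The cleaner constant $d^{1-s}$ crucially depends on invoking Fenchel--Bunt via connectedness of $K$, which in turn relies on the star-shapedness inherent in the $s$-homogeneity axiom. Apart from this point, all steps are standard consequences of John's theorem and the concavity of $x\mapsto x^s$.
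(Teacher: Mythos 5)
Your proof is correct and follows essentially the same route as the paper's: form the convex hull of the $F$-norm unit ball, sandwich it between ellipsoids via John's theorem, and use the Fenchel--Bunt refinement of Carath\'eodory (valid since the ball is star-shaped, hence connected) together with concavity of $x\mapsto x^s$ to obtain the estimate $|v|\le d^{1-s}$ on the convex hull. The only cosmetic difference is that the paper states Fenchel's theorem with sub-convex combinations $\sum\lambda_i\le 1$ while you use $\sum\lambda_i=1$, but the bound $\sum\lambda_i^s\le d^{1-s}$ is identical.
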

\begin{proof}
Let $X:=\{v \in \mathbb{R}^d \colon |v|\leq 1\}$. Since $|\cdot|$ is an $s$-homogenous $F$-norm it is continuous with respect to the usual topology on $\mathbb{R}^d$, and therefore $X$ is closed (see remarks following Definition \ref{de:fnorm}). Since $|v|=0$ if and only if $v=0$ it follows easily that $X$ contains an open neighbourhood of the origin. 

Let $Z$ denote the convex hull of $X$, which is closed. Carath\'eodory's theorem on convex hulls (see e.g. \cite[Theorem 1.1.4]{Sc14}) asserts that the convex hull of a subset $Y$ of $\mathbb{R}^d$ is precisely the set of all convex combinations of subsets of $Y$ having cardinality $d+1$. In the case where $Y$ is connected, an observation of W. Fenchel shows that the required cardinality may be reduced to $d$: see \cite{Ba12,Fe29}. Clearly every point of $X$ is connected to the origin by a straight line, so $X$ is a connected set and therefore
\[Z=\left\{\sum_{i=1}^{d}\lambda_i v_i \colon v_1,\ldots,v_{d} \in X,\text{ }\lambda_1,\ldots,\lambda_{d} \geq 0\text{ and }\sum_{i=1}^{d}\lambda_i\leq 1\right\}.\]
If $v \in Z$, let us write $v=\sum_{i=1}^{d}\lambda_i v_i$ in the above fashion. We have
\[\left|v\right| \leq \sum_{i=1}^{d}\left|\lambda_iv_i\right| \leq \sum_{i=1}^{d}\lambda_i^s\leq d^{1-s}\]
and it follows that $X \subseteq Z \subseteq d^{\frac{1}{s}-1}X$. Since $Z$ has nonempty interior and is convex and symmetrical with respect to the map $v \mapsto -v$, John's theorem (see e.g. \cite[Theorem 10.12.2]{Sc14}) implies that there exists a closed ellipsoid $E\subseteq Z$ centred at the origin such that $Z\subseteq \sqrt{d}\cdot E$. Let $B\in GL_d(\mathbb{R})$ be a matrix which maps the ellipsoid $E$ bijectively onto the unit ball of $\mathbb{R}^d$. If $\|Bv\|^s=1$ then $v \in E \subseteq Z \subseteq d^{\frac{1}{s}-1}X$ and therefore $|v| \leq d^{1-s}=d^{1-s}\|Bv\|^s$. On the other hand if $|v|=1$ then $v \in X \subseteq Z \subseteq \sqrt{d}\cdot E$ and therefore $\|Bv\|^s \leq  d^{s/2}=d^{s/2}|v|$. The result for general $v$ follows by $s$-homogeneity.
\end{proof}
The following simpler version of Lemma \ref{le:exists-b} will be used to treat the case $s \geq 1$:
\begin{lemma}\label{le:exists-c}
Suppose that $\|\cdot\|_*$ is a norm on $\mathbb{R}^d$. Then there exists $B \in GL_d(\mathbb{R})$ such that $\|v\|_*\leq \|Bv\| \leq d^{\frac{1}{2}} \|v\|_*$ for all $v \in \mathbb{R}^d$. 
\end{lemma}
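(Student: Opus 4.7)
The plan is to invoke John's theorem directly, mimicking the final step of the proof of Lemma \ref{le:exists-b} but without the extra overhead needed in the $F$-norm setting. Let $X := \{v \in \mathbb{R}^d : \|v\|_* \leq 1\}$ be the unit ball of $\|\cdot\|_*$. Since $\|\cdot\|_*$ is a norm, $X$ is a closed, bounded, convex, centrally symmetric subset of $\mathbb{R}^d$ with nonempty interior. Thus John's theorem (as cited in the previous lemma, \cite[Theorem 10.12.2]{Sc14}) yields a closed ellipsoid $E \subseteq X$ centred at the origin such that $X \subseteq \sqrt{d}\cdot E$.

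Next, I choose $B \in GL_d(\mathbb{R})$ to be any linear isomorphism mapping $E$ bijectively onto the closed Euclidean unit ball of $\mathbb{R}^d$. I then verify the two desired inequalities on the appropriate unit spheres and extend by homogeneity. If $\|Bv\| = 1$, then $Bv$ lies in the Euclidean unit ball, so $v \in E \subseteq X$, whence $\|v\|_* \leq 1 = \|Bv\|$. Conversely, if $\|v\|_* = 1$, then $v \in X \subseteq \sqrt{d}\cdot E$, so $v/\sqrt{d} \in E$ and hence $\|Bv\|/\sqrt{d} = \|B(v/\sqrt{d})\| \leq 1$, giving $\|Bv\| \leq \sqrt{d} = \sqrt{d}\,\|v\|_*$. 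Since both $\|\cdot\|_*$ and $v \mapsto \|Bv\|$ are $1$-homogenous, these inequalities on the unit spheres extend to all $v \in \mathbb{R}^d$.

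There is essentially no obstacle here: unlike Lemma \ref{le:exists-b}, we do not need to pass to a convex hull via Carath\'{e}odory/Fenchel because $X$ is already convex, and we do not need to track an extra $s$-dependent factor because $\|\cdot\|_*$ is already $1$-homogenous. The proof is therefore just the ``John's theorem'' portion of the argument for Lemma \ref{le:exists-b}, with $1-s$ and $s/2$ replaced by $0$ and $1/2$ respectively.
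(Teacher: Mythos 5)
Your proof is correct and is essentially identical to the paper's own argument: both define $X$ as the unit ball of $\|\cdot\|_*$, invoke John's theorem to obtain an ellipsoid $E$ with $E \subseteq X \subseteq \sqrt{d}\cdot E$, take $B$ mapping $E$ onto the Euclidean unit ball, and verify the two inequalities on the respective unit spheres.
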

\begin{proof}
Let $X$ denote the convex set $\{v \in \mathbb{R}^d \colon \|v\|_*\leq 1\}$. By John's theorem there exists an ellipsoid $E$ centred at the origin of $\mathbb{R}^d$ such that $E \subseteq X\subseteq \sqrt{d}\cdot E$. Let $B \in GL_d(\mathbb{R})$ be a matrix which maps $E$ bijectively onto the Euclidean unit ball. If $\|v\|_*=1$ then $v \in X \subseteq \sqrt{d}\cdot E$ and therefore $\|Bv\|\leq \sqrt{d}=d^{\frac{1}{2}}\|v\|_*$; conversely if $\|Bv\|=1$ then $v \in E \subseteq X$ and therefore $\|v\|_* \leq 1=\|Bv\|$.
\end{proof}

Finally we note the following elementary lemma:
\begin{lemma}\label{le:sup-inside}
Let $A \in M_d(\mathbb{R})$ and let $e_1,\ldots,e_d$ be an orthonormal basis for $\mathbb{R}^d$. Then
\[\max_{1 \leq i \leq d} \|Ae_i\| \geq d^{-\frac{1}{2}}\|A\|.\]
\end{lemma}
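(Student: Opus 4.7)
The plan is to use the fact that the operator norm $\|A\|$ is defined as the supremum of $\|Av\|$ over unit vectors $v$, and then expand an arbitrary unit vector in the given orthonormal basis and estimate its image by Cauchy--Schwarz.

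Specifically, I would first take any unit vector $v \in \mathbb{R}^d$ and write it in coordinates as $v = \sum_{i=1}^d c_i e_i$, where $c_i = \langle v, e_i\rangle$ and $\sum_{i=1}^d |c_i|^2 = 1$ by orthonormality. Then by linearity and the triangle inequality,
\[\|Av\| \leq \sum_{i=1}^d |c_i|\,\|Ae_i\|.\]
Applying Cauchy--Schwarz to the right-hand side yields
\[\|Av\| \leq \left(\sum_{i=1}^d |c_i|^2\right)^{1/2}\left(\sum_{i=1}^d \|Ae_i\|^2\right)^{1/2} = \left(\sum_{i=1}^d \|Ae_i\|^2\right)^{1/2} \leq \sqrt{d}\cdot \max_{1 \leq i \leq d}\|Ae_i\|.\]

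Finally, taking the supremum over all unit vectors $v$ gives $\|A\| \leq \sqrt{d}\cdot \max_{1 \leq i \leq d}\|Ae_i\|$, which is the desired inequality after rearrangement. There is no genuine obstacle here; the only thing to watch is that $\|\cdot\|$ on vectors denotes the Euclidean norm (so that the basis is orthonormal with respect to it) and on matrices denotes the corresponding operator norm, which is the convention used throughout the paper.
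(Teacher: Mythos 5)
Your proof is correct and follows essentially the same approach as the paper: expand a unit vector in the orthonormal basis, apply the triangle inequality, and use Cauchy--Schwarz to pick up the factor $\sqrt{d}$. The only cosmetic difference is that the paper applies Cauchy--Schwarz in the form $\sum_i |\lambda_i| \le \sqrt{d}$ and then factors out $\max_i\|Ae_i\|$, whereas you apply it directly to the weighted sum and then bound $\sum_i \|Ae_i\|^2 \le d\max_i\|Ae_i\|^2$; the two routes are equivalent.
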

\begin{proof}
By compactness we may choose $v \in \mathbb{R}^d$ such that $\|v\|=1$ and $\|Av\|=\|A\|$. Choose $\lambda_1,\ldots,\lambda_d \in \mathbb{R}$ such that $v=\sum_{i=1}^d \lambda_i e_i$. Since $\sum_{i=1}^d |\lambda_i|^2=1$ we have $\sum_{i=1}^d |\lambda_i|\leq \sqrt{d}$ and therefore
\[\left\|A\right\|=\left\|Av\right\| \leq \sum_{i=1}^d \left|\lambda_i\right| \cdot\left\|Ae_i\right\| \leq \sqrt{d}\cdot\max_{1 \leq i \leq d}\left\|Ae_i\right\|\]
as required.\end{proof}

\begin{proof}[Proof of Proposition \ref{pr:gldr-bound}]
Let $s,\varepsilon>0$. If $0<s<1$ then by Lemma \ref{le:exists-norm} there exists an $s$-homogenous $F$-norm $|\cdot|_\varepsilon$ on $\mathbb{R}^d$ such that for all $v \in \mathbb{R}^d$
\begin{equation}\label{eq:temp}\int |Av|_\varepsilon d\mu(A)\leq \left(e^{\mathbf{M}(\mu,s)}+\varepsilon\right)|v|_\varepsilon.\end{equation}
Let $B$ be the matrix provided by Lemma \ref{le:exists-b}, which satisfies
\begin{equation}\label{eq:temp2}d^{s-1}|v|_\varepsilon\leq \|Bv\|^s \leq d^{\frac{s}{2}}|v|_\varepsilon\end{equation}
for every $v \in \mathbb{R}^d$.
  By Lemma \ref{le:sup-inside}, 
\begin{align*}\int \left\|BAB^{-1}\right\|^s d\mu(A) &\leq d^{\frac{s}{2}} \int \max_{1 \leq i \leq d}\left\|BAB^{-1}e_i\right\|^s d\mu(A) \\
&\leq d^{\frac{s}{2}}\int \sum_{i=1}^d\left\|BAB^{-1}e_i\right\|^s d\mu(A) \\
&= d^{\frac{s}{2}} \sum_{i=1}^d\int \left\|BAB^{-1}e_i\right\|^s d\mu(A) \\
&\leq d^{1+\frac{s}{2}} \max_{1 \leq i \leq d}\int \left\|BAB^{-1}e_i\right\|^s d\mu(A),\end{align*}
so in particular there exists a unit vector $v \in \mathbb{R}^d$ with $\|v\|=1$ such that
\[\int \left\|BAB^{-1}\right\|^s d\mu(A) \leq d^{1+\frac{s}{2}}\int \left\|BAB^{-1}v\right\|^s d\mu(A).\]
Applying \eqref{eq:temp} and \eqref{eq:temp2} we find that
\begin{align*}\int \left\|BAB^{-1}v\right\|^s d\mu(A) &\leq d^{\frac{s}{2}} \int \left|AB^{-1}v\right|_\varepsilon d\mu(A)\\
&\leq  d^{\frac{s}{2}} \left(e^{\mathbf{M}(\mu,s)}+\varepsilon\right)\left|B^{-1}v\right|_\varepsilon\\
&\leq  d^{1-\frac{s}{2}}\left(e^{\mathbf{M}(\mu,s)}+\varepsilon\right)\|v\|^s,\end{align*}
and by combining the previous two inequalities we obtain
\[\int \left\|BAB^{-1}\right\|^sd\mu(A) \leq d^2\left(e^{\mathbf{M}(\mu,s)}+\varepsilon\right)\]
since $\|v\|=1$. Since $\varepsilon>0$ was arbitrary the conclusion of Proposition \ref{pr:gldr-bound} follows.  The case $s \geq 1$ may be derived by combining Lemmas \ref{le:exists-norm}, \ref{le:exists-c} and \ref{le:sup-inside} in a directly analogous manner.
\end{proof}
Now that Proposition \ref{pr:gldr-bound} has been proved we are close to being able to prove Theorem \ref{th:mpressure}. We require just one further lemma, which the reader should compare with \cite[Lemma 2]{Bo03}. The optimisation of the constant in this lemma is an interesting problem which we do not attempt to address here.
\begin{lemma}\label{le:bochipart2}
Let $A_1,\ldots,A_d \in M_d(\mathbb{R})$ and $B \in GL_d(\mathbb{R})$. Then
\begin{equation}\label{eq:beans} \left\|A_1\cdots A_d\right\| \leq d^{d} \max_{1 \leq k \leq d} \left( \left\|BA_k B^{-1}\right\| \cdot \prod_{\substack{1 \leq i \leq d \\ i \neq k}} \left\|A_i\right\|\right).\end{equation}
\end{lemma}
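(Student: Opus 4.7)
The plan is to follow the strategy of Bochi's Lemma~2 in \cite{Bo03}. Let $v \in \mathbb{R}^d$ be a unit vector achieving $\|A_1 \cdots A_d v\| = \|A_1 \cdots A_d\|$, and define intermediate vectors $x_k := A_{k+1} A_{k+2} \cdots A_d v$ for $k = 0, 1, \ldots, d$ (with the convention $x_d = v$). These satisfy $\|x_d\| = 1$, $\|x_0\| = \|A_1 \cdots A_d\|$, and the twin recurrences
\[
x_{k-1} = A_k x_k, \qquad Bx_{k-1} = (BA_kB^{-1})(Bx_k),
\]
yielding the elementary bounds $\|x_{k-1}\| \leq \|A_k\| \|x_k\|$ and $\|Bx_{k-1}\| \leq \|BA_kB^{-1}\| \|Bx_k\|$ for every $k$. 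The first recurrence telescopes to the trivial estimate $\|A_1 \cdots A_d\| \leq \prod_i \|A_i\|$, while the second yields the all-conjugated estimate $\|A_1 \cdots A_d\| \leq \|B\|\|B^{-1}\|\prod_i \|BA_iB^{-1}\|$. The task is to interpolate between these two telescopings so as to retain a single conjugated factor while jettisoning the $\|B\|\|B^{-1}\|$ distortion.

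The key observation is that the $d + 1$ vectors $x_0, x_1, \ldots, x_d \in \mathbb{R}^d$ are necessarily linearly dependent. I plan to extract a normalized linear relation $\sum_{k=0}^d c_k x_k = 0$ with $\max_k |c_k| = 1$, which by a Carath\'eodory-style argument expresses one of the $x_{k^*}$ as a $d$-term linear combination of the others with coefficients in $[-1,1]$. Combining this relation with the two recurrences above and with Lemma~\ref{le:sup-inside} applied to an orthonormal basis adapted to $B$, one selects an index $k$ at which the naive bound $\|x_{k-1}\|/\|x_k\| \leq \|A_k\|$ may be replaced by the conjugated bound $\|Bx_{k-1}\|/\|Bx_k\| \leq \|BA_kB^{-1}\|$, while the remaining $d-1$ ratios are controlled by the original norms $\|A_i\|$. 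The factor $d^d$ arises from a $\sqrt{d}$ contribution in each of the $d$ applications of Lemma~\ref{le:sup-inside} and a further factor $d$ from the bound on the at most $d$ nonzero coefficients in the Carath\'eodory relation, combined with the $d$ choices of the index $k^*$.

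The main obstacle I foresee is the elimination of the spurious $\|B\|\|B^{-1}\|$ factor that accompanies any direct single-position insertion of $B^{-1}B$ in the product $A_1 \cdots A_d$, since both $\|A_1 \cdots A_{k-1} B^{-1}\|$ and $\|B A_{k+1} \cdots A_d\|$ carry such a factor on their own. Overcoming this requires that the index $k$ be chosen in a data-dependent way from the linear dependence among the $x_j$, so that the two distortion factors on either side of the inserted $BA_kB^{-1}$ genuinely cancel against the geometry of the maximizing vector rather than being absorbed into a worst-case constant. I expect this to be the one genuinely delicate step; the remaining bookkeeping reduces to repeated applications of Lemma~\ref{le:sup-inside} and the submultiplicativity of the operator norm, as the paper's preamble suggests that the analogue of \eqref{eq:b2} is the comparatively straightforward half of the argument.
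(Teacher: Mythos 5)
Your proposed route differs from the paper's (and from Bochi's actual argument, which the paper follows): the paper first reduces to diagonal $B$ via the singular value decomposition $B = UDV$ and passes to the entrywise max norm $\|\cdot\|_\infty$; it then expands $A_1\cdots A_d$ as a sum of $d^{d-1}$ ``paths'' $a^{(1)}_{i_0 i_1}\cdots a^{(d)}_{i_{d-1}i_d}$, picks a maximizing path, and observes that the $d+1$ indices $i_0,\ldots,i_d$ select diagonal entries $\lambda_{i_0},\ldots,\lambda_{i_d}$ of $B$ taking at most $d$ distinct values, so the chain $|\lambda_{i_0}|<\cdots<|\lambda_{i_d}|$ is impossible. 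This pigeonhole forces a step $\ell$ with $|\lambda_{i_{\ell-1}}|\geq |\lambda_{i_\ell}|$, and at exactly that step the entry $|a^{(\ell)}_{i_{\ell-1}i_\ell}|$ can be replaced, for free, by $|\lambda_{i_{\ell-1}}a^{(\ell)}_{i_{\ell-1}i_\ell}\lambda_{i_\ell}^{-1}|\leq\|BA_\ell B^{-1}\|_\infty$, because for diagonal $B$ the entries of $BA_\ell B^{-1}$ are precisely $\lambda_i a^{(\ell)}_{ij}\lambda_j^{-1}$.

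Your plan replaces this discrete pigeonhole by the linear dependence of the vectors $x_0,\ldots,x_d\in\mathbb{R}^d$, but this is where the argument breaks down. What you need is an index $k$ such that the conjugated bound can be inserted without paying $\|B\|\cdot\|B^{-1}\|$, i.e.\ an index where the ratio $r_k/r_{k-1}$ (with $r_j:=\|Bx_j\|/\|x_j\|$) is controlled. Linear dependence $\sum_k c_k x_k = 0$ places no constraint whatsoever on the ratios $r_j$: for example with $B = \mathrm{diag}(1,t)$, $t\gg 1$, each $x_j$ independently can lie near $e_1$ (so $r_j\approx 1$) or near $e_2$ (so $r_j\approx t$), and any prescribed pattern of these is compatible with a linear dependence of three vectors in $\mathbb{R}^2$. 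No Carath\'eodory-style extraction of a subrelation changes this, because the coefficients $c_k$ carry no information about the $B$-distortion of the individual $x_k$. The decisive structural input in the paper is precisely the reduction to diagonal $B$: it converts ``$B$-distortion along the product'' into a combinatorial statement about $d+1$ indices drawn from a $d$-element set, which is what makes the pigeonhole bite. Without that reduction there is no analogous pigeonhole on the vectors $x_k$, so your sketch leaves the ``one genuinely delicate step'' you flag genuinely open. (As a secondary sign of trouble, the constant accounting you describe---$(\sqrt{d})^d\cdot d = d^{d/2+1}$---does not match the target $d^d$ either.)
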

\begin{proof}
We first claim that it is sufficient to prove \eqref{eq:beans} under the additional assumption that  $B$ is a diagonal matrix. Indeed, let us suppose the lemma to be valid in all cases where the matrix $B$ is diagonal. Given a general matrix $B \in GL_d(\mathbb{R})$, by singular value decomposition we may write $B=UDV$ where $U$ and $V$ are orthogonal matrices and $D$ is an invertible diagonal matrix. Since \eqref{eq:beans} is assumed to be valid for the matrices $VA_1V^{-1}$, $VA_2V^{-1}$, \ldots, $VA_dV^{-1}$ and the diagonal matrix $D$ we have
\[\left\|VA_1\cdots A_dV^{-1}\right\| \leq d^{d} \max_{1 \leq k \leq d} \left( \left\|DVA_kV^{-1} D^{-1}\right\| \cdot \prod_{\substack{1 \leq i \leq d \\ i \neq k}} \left\|VA_iV^{-1}\right\|\right)\]
or, since $U$ and $V$ are isometries with respect to the Euclidean norm $\|\cdot\|$,
\[\left\|A_1\cdots A_d\right\| \leq d^{d} \max_{1 \leq k \leq d} \left( \left\|UDVA_kV^{-1} D^{-1}U^{-1}\right\| \cdot \prod_{\substack{1 \leq i \leq d \\ i \neq k}} \left\|A_i\right\|\right)\]
which is precisely \eqref{eq:beans} for the general matrix $B$. For the remainder of the proof we therefore make the additional hypothesis that $B$ is diagonal.

Let us write $\|A\|_\infty$ for the maximum of the absolute values of the entries of the matrix $A$. Since $\|A\|_\infty \leq \|A\| \leq d\|A\|_\infty$, in order to prove \eqref{eq:beans} it is sufficient for us to prove the inequality 
\[ \left\|A_1\cdots A_d\right\|_\infty \leq d^{d-1} \max_{1 \leq k \leq d} \left( \left\|BA_k B^{-1}\right\|_\infty \cdot \prod_{\substack{1 \leq i \leq d \\ i \neq k}} \left\|A_i\right\|_\infty\right)\]
where $B$ is assumed to be diagonal. 

Let us therefore let $\lambda_1,\ldots,\lambda_d$ denote the diagonal entries of the matrix $B$, and let $a^{(k)}_{ij}$ denote the entry in the $i^{\mathrm{th}}$ row and $j^{\mathrm{th}}$ column of the matrix $A_k$.   We may estimate
\begin{align*}\left\|A_1\cdots A_d\right\|_\infty &= \max_{1 \leq i_0,i_d \leq d} \left|\sum_{i_1,\ldots,i_{d-1}=1}^d a_{i_0 i_1}^{(1)}a_{i_1i_2}^{(2)}\cdots a_{i_{d-1}i_d}^{(d)}\right|\\
&\leq d^{d-1}\max_{1 \leq i_0,i_1,\ldots,i_d \leq d} \left|  a_{i_0 i_1}^{(1)}a_{i_1i_2}^{(2)}\cdots a_{i_{d-1}i_d}^{(d)}\right|\\
&=d^{d-1} \max_{1 \leq i_0,i_1,\ldots,i_d \leq d}\prod_{k=1}^{d} \left|a_{i_{k-1}i_{k}}^{(k)}\right|.\end{align*}
Let us fix a choice of $i_0,\ldots,i_d$ which achieves this maximum. Since the $d+1$ numbers $\lambda_{i_0},\ldots,\lambda_{i_d}$ take at most $d$ distinct values it is impossible to have the chain of inequalities $|\lambda_{i_0}|<|\lambda_{i_1}|<\cdots <|\lambda_{i_d}|$, so there must necessarily exist $\ell\in \{1,\ldots,d\}$ such that $|\lambda_{i_{\ell-1}}|\geq|\lambda_{i_\ell}|$. Thus
\begin{align*}\left\|A_1\cdots A_d\right\|_\infty &\leq d^{d-1}\left|\lambda_{i_{\ell-1}}a_{i_{\ell-1} i_{\ell}}^{(\ell)}\lambda_{i_{\ell}}^{-1
}\right|\prod_{\substack{1 \leq k \leq d\\k \neq \ell}} \left|a_{i_{k-1}i_{k}}^{(k)}\right|\\
&\leq d^{d-1} \left\|BA_\ell B^{-1}\right\|_\infty \prod_{\substack{1 \leq k \leq d\\k \neq \ell}} \left\|A_k\right\|_\infty\\
&\leq d^{d-1} \max_{1 \leq r \leq d}\left(\left\|BA_rB^{-1}\right\|_\infty \prod_{\substack{1 \leq k \leq d\\k \neq r}} \left\|A_k\right\|_\infty\right)\end{align*}
using the fact that $B$ is diagonal, and this is exactly the result required.
\end{proof}
\begin{proof}[Proof of Theorem \ref{th:mpressure}]
Let $\varepsilon>0$. By Proposition \ref{pr:gldr-bound} we may choose $B \in GL_d(\mathbb{R})$ such that 
\[\int  \left\|BAB^{-1}\right\|^sd\mu(A) \leq C_{d,s}\left(e^{\mathbf{M}(\mu,s)}+\varepsilon\right).\]
If $A_1,\ldots,A_d \in M_d(\mathbb{R})$ are arbitrary matrices, then it follows from Lemma \ref{le:bochipart2} that
\[\left\|A_1\ldots A_d\right\|^s \leq d^{sd} \sum_{i=1}^d \left\|BA_iB^{-1}\right\|^s\left(\prod_{\substack{1 \leq k \leq d\\k \neq i}} \|A_k\|^s\right).\]
By integration it follows that
\begin{align*}\int \|A\|^s d\mu_d(A) &= \int \left\|A_1\ldots A_d\right\|^s  d\mu(A_1)d\mu(A_2)\cdots d\mu(A_d)\\
&\leq d^{1+sd} \left(\int\left\|BAB^{-1}\right\|^s d\mu(A)\right)\left(\int \|A\|^sd\mu(A)\right)^{d-1}\\
&\leq d^{1+sd}C_{s,d}\left(e^{\mathbf{M}(\mu,s)}+\varepsilon\right)\left(\int \|A\|^sd\mu(A)\right)^{d-1}\end{align*}
and since $\varepsilon>0$ was arbitrary the result follows.
\end{proof}

%%%%%%%%%%%%%%%%%%%%%%%%%%%%%%%%%%%%%%%%%%%
% NEW SECTION
%%%%%%%%%%%%%%%%%%%%%%%%%%%%%%%%%%%%%%%%%%%

%This section ok!
\section{Proof of Theorem \ref{th:mcontinuous}}\label{se:three}
For all $\nu \in \mathcal{M}_d$ and $t>0$ we have
\[\mathbf{M}(\nu,t) =\inf_{n \geq 1} \frac{1}{n}\log \left(\int \|A\|^td\nu_n(A)\right)\]
by subadditivity, and this shows that $\mathbf{M}$ is an infimum of continuous functions $\mathcal{M}_d \times (0,+\infty) \to [-\infty,+\infty)$. In particular it is upper semi-continuous. It follows in particular that if $\mathbf{M}(\mu,s)=-\infty$ then $\mathbf{M}$ is continuous at $(\mu,s)$.

If $(\mu,s) \in \mathcal{M}_d\times(0,+\infty)$ and $\mathbf{M}(\mu,s)>-\infty$ then necessarily $\int \|A\|^sd\mu_d(A)>0$, so in particular $\int \|A\|^td\nu_d(A)>0$ for all $(\nu,t)$ sufficiently close to $(\mu,s)$. By Theorem \ref{th:mpressure} this implies that $\mathbf{M}(\nu,t)>-\infty$ for all such $(\nu,t)$. For $(\nu,t)$ in a small neighbourhood of $(\mu,s)$ we therefore have $\int \|A\|^td\nu_n(A)>0$ for all $n \geq 1$, so by Theorem \ref{th:mpressure}
\[\mathbf{M}(\nu,t)\geq \frac{1}{n}\log\left(\frac{\int \|A\|^t d\nu_{nd}(A)}{K_{d,s}\left(\int \|A\|^t d\nu_{n}(A)\right)^{d-1}}\right)\]
for all $n \geq 1$ when $(\nu,t)$ is sufficiently close to $(\mu,s)$. Since the right-hand side converges to $\mathbf{M}(\nu,t)$ as $n \to \infty$ we deduce that
\[\mathbf{M}(\nu,t)=\sup_{n \geq 1}\frac{1}{n}\log \left(\frac{\int \|A\|^t d\nu_{nd}(A)}{K_{d,s}\left(\int \|A\|^t d\nu_{n}(A)\right)^{d-1}}\right)\]
for $(\nu,t)$ sufficiently close to $(\mu,s)$. This shows that in a neighbourhood of $(\mu,s)$ the function $\mathbf{M}$ is equal to a supremum of continuous functions, and hence is lower semi-continuous on that neighbourhood. In particular it is lower semi-continuous at $(\mu,s)$, and this completes the proof.

%%%%%%%%%%%%%%%%%%%%%%%%%%%%%%%%%%%%%%%%%%%
% NEW SECTION
%%%%%%%%%%%%%%%%%%%%%%%%%%%%%%%%%%%%%%%%%%%

\section{Proof of Theorem \ref{th:fcontinuous}}\label{se:cuntcunt}

We begin the proof by deducing from Theorem \ref{th:mcontinuous} that $\mathbf{P}$ is continuous on sets of the form $\mathcal{M}_d\times\{s\}$ for $s\in\mathbb{Q}$:
\begin{lemma}\label{le:rational}
For every $d \geq 1$ and every rational number $s>0$, the function $\mu \mapsto \mathbf{P}(\mu,s)$ is a continuous function from $\mathcal{M}_d$ to $[-\infty,+\infty)$.
\end{lemma}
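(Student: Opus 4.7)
The plan is to reduce continuity of $\mu\mapsto\mathbf{P}(\mu,s)$ to the continuity of $\mathbf{M}$ supplied by Theorem~\ref{th:mcontinuous}, by expressing a fixed rational power of $\varphi^s$ as the operator norm of a continuous multiplicative map between matrix algebras. The case $s\geq d$ is easy and requires no rationality: since $\varphi^s(A)=|\det A|^{s/d}$ and $A\mapsto|\det A|^{s/d}$ is multiplicative, a direct calculation gives
\[
\int\varphi^s(A)\,d\mu_n(A)=\left(\int|\det A|^{s/d}\,d\mu(A)\right)^n,
\]
so $\mathbf{P}(\mu,s)=\log\int|\det A|^{s/d}\,d\mu(A)$, which is continuous on $\mathcal{M}_d$ since $A\mapsto|\det A|^{s/d}$ is continuous and bounded on the closed unit ball of $M_d(\mathbb{R})$. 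So I can restrict to rational $s\in(0,d)$.

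For such $s$, write $s=k+p/q$ with $k=\lfloor s\rfloor\in\{0,\ldots,d-1\}$, $q\geq 1$, $0\leq p<q$, and $\gcd(p,q)=1$ (with the convention $\gcd(0,1)=1$). Define
\[
\Phi_s(A):=(\wedge^k A)^{\otimes(q-p)}\otimes(\wedge^{k+1}A)^{\otimes p},
\]
viewed as a linear endomorphism of the Hilbert-space tensor product $(\wedge^k\mathbb{R}^d)^{\otimes(q-p)}\otimes(\wedge^{k+1}\mathbb{R}^d)^{\otimes p}$ and identified, after choice of orthonormal basis, with an element of $M_D(\mathbb{R})$ for some integer $D$ depending only on $d,k,p,q$. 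By functoriality of $\wedge^j$ and of $\otimes$, the map $\Phi_s\colon M_d(\mathbb{R})\to M_D(\mathbb{R})$ is continuous and multiplicative: $\Phi_s(AB)=\Phi_s(A)\Phi_s(B)$. Using the identities $\|\wedge^j A\|=\sigma_1(A)\cdots\sigma_j(A)$ and $\|X\otimes Y\|=\|X\|\cdot\|Y\|$ for the operator norms induced by Euclidean inner products, I compute
\[
\|\Phi_s(A)\|=\|\wedge^k A\|^{q-p}\|\wedge^{k+1}A\|^{p}=\varphi^s(A)^{q}.
\]
In particular $\Phi_s$ sends the closed unit ball of $M_d(\mathbb{R})$ into the closed unit ball of $M_D(\mathbb{R})$, so the pushforward $\mu\mapsto\Phi_{s,*}\mu$ is well-defined as a map $\mathcal{M}_d\to\mathcal{M}_D$, and it is weak-$*$ continuous because $\Phi_s$ is continuous.

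Combining multiplicativity with the norm identity gives $(\Phi_{s,*}\mu)_n=\Phi_{s,*}(\mu_n)$ and hence, for every $n\geq 1$,
\[
\int\|B\|^{1/q}\,d(\Phi_{s,*}\mu)_n(B)=\int\|\Phi_s(A)\|^{1/q}\,d\mu_n(A)=\int\varphi^s(A)\,d\mu_n(A).
\]
Taking logarithms, dividing by $n$, and sending $n\to\infty$ yields the key identity
\[
\mathbf{P}(\mu,s)=\mathbf{M}(\Phi_{s,*}\mu,\,1/q),
\]
with both limits existing by subadditivity. The claimed continuity of $\mu\mapsto\mathbf{P}(\mu,s)$ now follows immediately by composing the continuous pushforward $\mathcal{M}_d\to\mathcal{M}_D$ with the continuity of $\mathbf{M}$ on $\mathcal{M}_D\times(0,+\infty)$ provided by Theorem~\ref{th:mcontinuous} applied in dimension $D$. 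The one creative step is the construction of $\Phi_s$; once a continuous multiplicative surrogate $A\mapsto\Phi_s(A)$ satisfying $\|\Phi_s(A)\|=\varphi^s(A)^{q}$ has been exhibited, the rest of the argument is formal. The obstruction to extending the lemma to irrational $s$ is precisely the unavailability of such a $\Phi_s$ — it is exactly at this point that the rationality hypothesis is used.
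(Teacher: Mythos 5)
Your proof is correct and follows essentially the same route as the paper: both reduce to the trivial case $s\geq d$, then for rational $s\in(0,d)$ construct the multiplicative map $A\mapsto(\wedge^k A)^{\otimes(q-p)}\otimes(\wedge^{k+1}A)^{\otimes p}$, verify the norm identity $\|\Phi_s(A)\|^{1/q}=\varphi^s(A)$, and deduce $\mathbf{P}(\mu,s)=\mathbf{M}(\Phi_{s,*}\mu,1/q)$, transferring continuity via Theorem~\ref{th:mcontinuous}. The only cosmetic difference is that the paper disposes of $0<s\leq1$ separately using $\mathbf{P}=\mathbf{M}$ there, whereas you fold $k=0$ into the general construction; both are fine.
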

\begin{proof}
If $s \geq d$ then the result is trivial, since by the multiplicativity of the determinant
\[\mathbf{P}(\mu,s)=\lim_{n \to \infty}\frac{1}{n}\log\int |\det A|^{s/d}d\mu_n(A)=\log\int|\det A|^{s/d}d\mu(A)\]
in that case. For $s \leq 1$ the result follows from Theorem \ref{th:mcontinuous} since $\mathbf{P}(\mu,s)=\mathbf{M}(\mu,s)$ when $0<s \leq 1$. Let us therefore write $s=k+\frac{p}{q}$ where $k$ is an integer in the range $1 \leq k \leq d-1$, and $0 \leq \frac{p}{q}<1$.

Before proceeding with the proof we recall some facts from multilinear algebra. Given a finite-dimensional real inner product space $(V,\langle \cdot \rangle)$ and integer $n \geq 1$ we let $\otimes^n V$ denote the tensor product $V \otimes \cdots \otimes V$ of $n$ copies of $V$, which is an inner product space when equipped with the inner product defined by
\[\langle u_1\otimes \cdots \otimes u_n,v_1 \otimes \cdots \otimes v_n\rangle :=\prod_{i=1}^n \langle u_i,v_i\rangle \]
for rank-one elements of $\otimes^n V$ and by bilinear extension for general elements of $\otimes^n V$. If $e_1,\ldots,e_d$ is an orthonormal  basis for $V$, then the $d^n$ vectors $e_{i_1}\otimes \cdots \otimes e_{i_n}$ form an orthonormal basis for $\otimes^n V$. Given a linear map $A$ acting on $V$ we may define an induced linear map $A^{\otimes n}$ acting on $\otimes^nV$ via the relation $A^{\otimes n}(u_1\otimes \cdots \otimes u_n):=Au_1 \otimes \cdots \otimes Au_n$, and it follows from the definition of the inner product on $\otimes^n V$ that $(A^{\otimes n})^*=(A^*)^{\otimes n}$. These observations together imply that the $d^n$ singular values of $A^{\otimes n}$ are precisely the products $\sigma_{i_1}(A)\cdots \sigma_{i_n}(A)$ for $1 \leq i_1,\ldots,i_n\leq d$; in particular, in the induced norm on $\otimes^n V$ we have $\|A^{\otimes n}\|=\|A\|^n$ for all linear maps $A \colon V \to V$. In a similar fashion, given two linear maps $A$ and $B$ acting on not-necessarily-identical finite-dimensional inner product spaces $V$ and $W$ we may define their tensor product $A \otimes B$ acting on $V \otimes W$ in such a manner that $\|A\otimes B\|=\|A\|\cdot\|B\|$.

A linear map $A$ acting on a finite-dimensional real inner product space $V$ similarly induces a map $A^{\wedge k}$ on the $k^{\mathrm{th}}$ exterior power $\wedge^kV$ by $A^{\wedge k}(v_1 \wedge \cdots \wedge v_n):=(Av_1 \wedge \cdots \wedge Av_n)$ for rank-one elements and by linear extension for general elements. An inner product on $\wedge^kV$ is induced by the formula
\[\langle u_1 \wedge \cdots \wedge u_n,v_1\wedge \cdots \wedge v_n\rangle :=\det \left(\left[\langle u_i,v_j\rangle\right]_{i,j=1}^d\right).\]
If $e_1,\ldots,e_d$ is an orthonormal basis for $V$ then the set of all elements of $\wedge^kV$ of the form $e_{i_1} \wedge \cdots \wedge e_{i_k}$ with $1 \leq i_1<i_2<\cdots <i_k\leq d$ is an orthonormal basis for $\wedge^kV$. Thus the dimension of $\wedge^kV$ is $d \choose k$, and we will identify $\wedge^k\mathbb{R}^d$ with $\mathbb{R}^{d \choose k}$. By similar considerations to the preceding ones for the tensor power, it follows easily that for every $A \in M_d(\mathbb{R})$
\[\left\|A^{\wedge k}\right\|=\sigma_1(A)\sigma_2(A)\cdots \sigma_k(A)\]
where $\sigma_k(A)$ is understood to be zero if $k>d$.

We may now give the proof of the lemma. Given the integers $d,k$ and a measure $\mu \in \mathcal{M}_d$, define $\hat{d}:={d \choose k}^{q-p}{d \choose k+1}^p$, and identify the $\hat{d}$-dimensional inner product space $\left(\otimes^{q-p}\left(\wedge^k \mathbb{R}^d\right)\right) \otimes \left(\otimes^p \left(\wedge^{k+1}\mathbb{R}^d\right)\right)$ with the standard $\hat{d}$-dimensional inner product space $\mathbb{R}^{\hat{d}}$. Define a new measure $\hat\mu \in \mathcal{M}_{\hat{d}}$ by
\[\hat\mu(X):=\mu\left(\left\{A \in M_{d}(\mathbb{R})\colon (A^{\wedge k})^{\otimes (q-p)}\otimes (A^{\wedge (k+1)})^{\otimes p} \in X\right\}\right) \]
for all Borel measurable subsets $X$ of the closed unit ball of $M_{\hat d}(\mathbb{R})$.
Since the map $A \mapsto  (A^{\wedge k})^{\otimes (q-p)}\otimes (A^{\wedge (k+1)})^{\otimes p} $ is continuous it follows easily that the map $\mu \mapsto \hat\mu$ is continuous.  For each $n \geq 1$ we have
\begin{align*}\int \|A\|^{\frac{1}{q}} d\hat\mu_n(A)&=\int \left\|(A^{\wedge k})^{\otimes (q-p)}\otimes (A^{\wedge (k+1)})^{\otimes p}\right\|^{\frac{1}{q}}d\mu_n(A)\\
&=\int\left\|A^{\wedge k}\right\|^{\frac{q-p}{q}} \left\|A^{\wedge (k+1)}\right\|^{\frac{p}{q}}d\mu_n(A)\\
&=\int \sigma_1(A)\cdots \sigma_k(A)\sigma_{k+1}(A)^{\frac{p}{q}}d\mu_n(A) = \int \varphi^{k+\frac{p}{q}}(A)d\mu_n(A)\end{align*}
and therefore
\[\mathbf{M}\left(\hat\mu,\frac{1}{q}\right)=\mathbf{P}\left(\mu,s\right)\]
for all $\mu \in\mathcal{M}_d$. Since the maps $\mu \mapsto \hat\mu$ and $\hat\mu \mapsto \mathbf{M}(\hat\mu,1/q)$ are continuous (the latter in view of Theorem \ref{th:mcontinuous}) it follows that $\mu \mapsto \mathbf{P}(\mu,s)$ is continuous as claimed.
\end{proof}
We next note:
\begin{lemma}\label{le:decr}
For every $\mu \in \mathcal{M}_d$ the function $s \mapsto \mathbf{P}(\mu,s)$ is decreasing, and is continuous except possibly at $s=1,\ldots,d-1$. 
\end{lemma}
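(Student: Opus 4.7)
The plan has two parts: monotonicity is a pointwise consequence of $\|A\|\leq 1$, and continuity off the integers $\{1,\ldots,d-1\}$ is obtained from a H\"older-based convexity argument together with an explicit formula valid on $[d,+\infty)$.

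First I would establish monotonicity. Since $\mu \in \mathcal{M}_d$ is supported in the closed unit ball of $M_d(\mathbb{R})$, every $A$ in its support satisfies $\sigma_i(A) \leq \|A\| \leq 1$ for all $i$. Inspection of the definition of $\varphi^s(A)$ then makes it plain that $s \mapsto \varphi^s(A)$ is non-increasing on $(0,+\infty)$ for each such $A$. Consequently $s \mapsto \int \varphi^s\,d\mu_n$ is non-increasing for every $n \geq 1$, and hence so is $\mathbf{P}(\mu,\cdot) = \inf_n n^{-1}\log\int\varphi^s\,d\mu_n$.

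The central observation for continuity is that on each interval $[k,k+1]$ with $0 \leq k \leq d-1$ the map $s \mapsto \log\varphi^s(A)$ is affine; equivalently, $\varphi^{\alpha s_1 + (1-\alpha)s_2}(A) = \varphi^{s_1}(A)^\alpha \varphi^{s_2}(A)^{1-\alpha}$ whenever $s_1,s_2 \in [k,k+1]$ and $\alpha\in[0,1]$. H\"older's inequality applied with exponents $1/\alpha$ and $1/(1-\alpha)$ then yields log-convexity of $\int\varphi^s\,d\mu_n$ in $s$ on $[k,k+1]$ for every $n$, and taking a pointwise limit in $n$ shows that $\mathbf{P}(\mu,\cdot)$ is convex on $[k,k+1]$. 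Since a convex function on an open interval is either identically $-\infty$ or everywhere finite and continuous there, this already gives continuity of $\mathbf{P}(\mu,\cdot)$ on each open interval $(k,k+1)$ for $0 \leq k \leq d-1$.

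To finish I would treat $[d,+\infty)$ separately. The multiplicativity of $\varphi^s(AB) = |\det A|^{s/d}|\det B|^{s/d}$ for $s \geq d$ gives the closed form $\mathbf{P}(\mu,s) = \log\int |\det A|^{s/d}\,d\mu(A)$ on that range, which is continuous by dominated convergence; in particular $\mathbf{P}(\mu,\cdot)$ is right-continuous at $s=d$. Left-continuity at $s=d$ follows from a soft argument: $\varphi^s(A)$ is continuous in $s$ at $s=d$ for each individual $A$ (both defining formulas give $|\det A|$ there), so dominated convergence makes $\int \varphi^s\,d\mu_n$ continuous at $s=d$ for every $n$; hence $\mathbf{P}(\mu,\cdot)$ is upper semi-continuous at $s=d$ as an infimum of continuous functions, and combined with monotonicity this forces left-continuity at $d$. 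No single step is expected to be a serious obstacle; the only mild subtlety is handling the value $-\infty$, for which the dichotomy that a convex function on an open interval is either identically $-\infty$ or everywhere finite suffices.
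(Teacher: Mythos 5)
Your proof is correct and follows essentially the same route as the paper: monotonicity from $\varphi^s(A)\geq\varphi^t(A)$ for $s\leq t$ on the unit ball, H\"older-based log-convexity of $\int\varphi^s\,d\mu_n$ on each $[k,k+1]$ giving continuity on the open intervals, and the explicit determinant formula on $[d,+\infty)$. The only cosmetic difference is at $s=d$: you use continuity of $s\mapsto\varphi^s(A)$ there together with dominated convergence, whereas the paper instead observes that $\mathbf{P}(\mu,\cdot)$ is upper semi-continuous everywhere (as an infimum of upper semi-continuous functions, since $s\mapsto\varphi^s(A)$ is upper semi-continuous) and combines this with monotonicity to get left-continuity on each half-open interval $(k,k+1]$.
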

\begin{proof}
Fix $\mu \in \mathcal{M}_d$.  Clearly $\varphi^s(A) \geq \varphi^t(A)$ whenever $0<s \leq t$ and $\|A\|\leq 1$. It follows that in this case
\[\mathbf{P}(\mu,s) =\lim_{n \to \infty}\frac{1}{n}\log\int \varphi^s(A)d\mu_n(A)\geq \lim_{n \to \infty}\frac{1}{n}\log\int \varphi^t(A)d\mu_n(A)=\mathbf{P}(\mu,t)\]
and the function is decreasing as claimed.

For $s\geq d$ continuity is obvious. 
Let us show that $s \mapsto \mathbf{P}(\mu,s)$ is continuous on each of the intervals $(k,k+1]$, where $0 \leq k <d$. Indeed, fix such a $k$ and let $0<t_1<t_2\leq 1$ and $t:= \lambda t_1 + (1-\lambda)t_2 \in (t_1,t_2)$. For every $n \geq 1$ we have
\begin{align*}\int \varphi^{k+t}(A)d\mu_n(A) &= \int \sigma_1(A)\cdots \sigma_k(A)\sigma_{k+1}(A)^{t}d\mu_n(A)\\
& =\int \varphi^{k+t_1}(A)^{\lambda}\varphi^{k+t_2}(A)^{1-\lambda}d\mu_n(A)\\
&\leq \left(\int \varphi^{k+t_1}(A)d\mu_n(A)\right)^{\lambda} \left(\int \varphi^{k+t_2}(A)d\mu_n(A)\right)^{1-\lambda}\end{align*}
by H\"older's inequality. By taking logarithms and letting $n \to \infty$ we deduce that
\[\mathbf{P}\left(\mu,k+t\right) \leq \lambda \mathbf{P}(\mu,k+t_1)+(1-\lambda)\mathbf{P}(\mu,k+t_2),\]
and thus the convexity property
\[\mathbf{P}\left(\mu,\lambda s_1 + (1-\lambda)s_2\right) \leq \lambda \mathbf{P}(\mu,s_1)+(1-\lambda)\mathbf{P}(\mu,s_2)\]
is satisfied for all $s_1,s_2 \in (k,k+1]$ and $\lambda \in [0,1]$. By standard results from convex analysis (see e.g. \cite[Theorem 10.1]{Ro70}) it follows that $s \mapsto \mathbf{P}(\mu,s)$ is continuous on $(k,k+1)$. Since $s \mapsto \varphi^s(A)$ is upper semi-continuous for each fixed $A$, the map $s \mapsto \mathbf{P}(\mu,s)=\inf_{n \geq 1}\frac{1}{n}\log \int \varphi^s(A)d\mu_n(A)$ is the infimum of a sequence of upper semi-continuous functions and hence is upper semi-continuous; since moreover it is decreasing, this implies that it is continuous from the left, and hence is continuous on $(k,k+1]$. This completes the proof.
\end{proof}
The above observations already suffice to prove part (i) of Theorem \ref{th:fcontinuous}:
\begin{proof}[Proof of Theorem \ref{th:fcontinuous}(i)]
Since $(s,A)\mapsto \varphi^s(A)$ is upper semi-continuous it follows easily via \eqref{eq:naffer} that the function $(\mu,s) \mapsto \mathbf{P}(\mu,s) $ is an infimum of upper semi-continuous functions from $\mathcal{M}_d\times(0,+\infty)$ to $[-\infty,+\infty)$, and hence is upper semi-continuous. It is also continuous on $\mathcal{M}_d\times[d,+\infty)$ as a consequence of the formula $\mathbf{P}(\mu,s)=\log\int |\det A|^{s/d}d\mu(A)$ which is valid in that region. It therefore suffices to prove the lower semi-continuity of $\mathbf{P}$ on $\mathcal{M}_d\times (k,k+1]$ for every integer $k$ such that $0\leq k<d$.

Let $(\mu,s) \in \mathcal{M}_d \times (k,k+1]$ where $0 \leq k <d$. If $\mathbf{P}(\mu,s)=-\infty$ then lower semicontinuity at $(\mu,s)$ holds trivially, so we assume this not to be the case. Suppose firstly that $s<k+1$. Given $\varepsilon>0$, using Lemma \ref{le:decr} we may choose $s_0 \in (k,k+1]\cap\mathbb{Q}$ such that $s<s_0$ and $\mathbf{P}(\mu,s_0)>\mathbf{P}(\mu,s)-\varepsilon/2$. Using Lemma \ref{le:rational} we may choose an open neighbourhood $U$ of $\mu$ such that $\mathbf{P}(\nu,s_0)>\mathbf{P}(\mu,s_0)-\varepsilon/2$ for all $\nu \in U$. It follows via Lemma \ref{le:decr} that for all $(\nu,t) \in U \times (k,s_0)$ we have
\[\mathbf{P}(\nu,t)\geq \mathbf{P}(\nu,s_0)>\mathbf{P}(\mu,s_0)-\frac{\varepsilon}{2}>\mathbf{P}(\mu,s)-\varepsilon\] and therefore $\mathbf{P}$ is lower semi-continuous at $(\mu,s)$. If $s=k+1$, we may similarly apply Lemma \ref{le:rational} to choose an open neighbourhood $U$ of $\mu$ such that $\mathbf{P}(\nu,k+1)>\mathbf{P}(\mu,k+1)-\varepsilon$ for all $\nu \in U$, which yields $\mathbf{P}(\nu,t) > \mathbf{P}(\mu,s)-\varepsilon$ for all $(\nu,t) \in U \times (k,k+1]$ in a similar manner.\end{proof}

To prove the remainder of the theorem we require an additional result. For the proof of the theorem we require only the case $p=0$, but we include the more general case for the benefit of the discussions in \S\ref{ss:one} and \S\ref{ss:two} above.
\begin{proposition}\label{pr:flight}
Let $k,d$ be integers such that $0<k<d$ and let $s=k+\frac{p}{q} \in (0,d)$. Let $\mu$ be a Borel measure on $M_d(\mathbb{R})$ such that $\int \varphi^k(A)d\mu(A)<\infty$.Then
\begin{equation}\label{eq:cutprice}\int \varphi^s(A)d\mu_{d'}(A) \leq Ke^{\mathbf{P}(\mu,s)} \left(\int \varphi^s(A)d\mu(A)\right)^{d'-1}\end{equation}
where $d':={d\choose k}^{q-p}{d \choose k+1}^p$ and $K:=(d')^{2+(d'+1)/q}(d'+1)^{(q-1)/q}$.
\end{proposition}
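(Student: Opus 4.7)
The plan is to deduce Proposition \ref{pr:flight} as a direct corollary of Theorem \ref{th:mpressure} by means of the tensor-and-exterior-power construction already introduced in the proof of Lemma \ref{le:rational}. Write $s = k+\tfrac{p}{q}$ with $0 \leq p < q$ integers, and set $d' := \binom{d}{k}^{q-p}\binom{d}{k+1}^p$. Identifying the inner product space $\bigl(\otimes^{q-p}(\wedge^k\mathbb{R}^d)\bigr)\otimes\bigl(\otimes^p(\wedge^{k+1}\mathbb{R}^d)\bigr)$ with $\mathbb{R}^{d'}$, I would define a continuous multiplicative map $\Phi\colon M_d(\mathbb{R})\to M_{d'}(\mathbb{R})$ by
\[\Phi(A):=(A^{\wedge k})^{\otimes(q-p)}\otimes (A^{\wedge(k+1)})^{\otimes p},\]
and let $\hat\mu:=\Phi_*\mu$ denote the pushforward measure.

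Next I would verify the key identity: using the formulae $\|A^{\wedge j}\|=\sigma_1(A)\cdots\sigma_j(A)$ and $\|B\otimes C\|=\|B\|\cdot\|C\|$ recalled in the proof of Lemma \ref{le:rational}, one computes
\[\|\Phi(A)\|^{1/q}=\bigl(\sigma_1(A)\cdots\sigma_k(A)\bigr)^{(q-p)/q}\bigl(\sigma_1(A)\cdots\sigma_{k+1}(A)\bigr)^{p/q}=\varphi^s(A).\]
Because $\Phi$ is multiplicative ($\Phi(AB)=\Phi(A)\Phi(B)$, since tensor and exterior powers are functorial), one has $\hat\mu_n=\Phi_*(\mu_n)$ for every $n\geq 1$, and therefore
\[\int\|A\|^{1/q}\,d\hat\mu_n(A)=\int\|\Phi(A)\|^{1/q}\,d\mu_n(A)=\int\varphi^s(A)\,d\mu_n(A).\]
Taking $n\to\infty$ gives $\mathbf{M}(\hat\mu,1/q)=\mathbf{P}(\mu,s)$, while the hypothesis on $\mu$ ensures $\int\|A\|^{1/q}\,d\hat\mu(A)<\infty$ so that Theorem \ref{th:mpressure} may legitimately be applied.

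The proof is then completed by invoking Theorem \ref{th:mpressure} on the measure $\hat\mu$ on $M_{d'}(\mathbb{R})$ at exponent $1/q$:
\[\int\|A\|^{1/q}\,d\hat\mu_{d'}(A)\leq K_{d',1/q}\,e^{\mathbf{M}(\hat\mu,1/q)}\left(\int\|A\|^{1/q}\,d\hat\mu(A)\right)^{d'-1},\]
and substituting the identities above immediately yields \eqref{eq:cutprice}. The constant produced by Theorem \ref{th:mpressure} is $K_{d',1/q}=(d')^{2+(d'+1)/q}\max\{(d')^{(q-1)/q},1\}$, which is bounded above by the stated $K=(d')^{2+(d'+1)/q}(d'+1)^{(q-1)/q}$. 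There is no essential obstacle in this argument: the multilinear construction from the proof of Lemma \ref{le:rational} does all the real work, and the proposition reduces to a single invocation of Theorem \ref{th:mpressure} transported through $\Phi$.
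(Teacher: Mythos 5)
Your proposal is correct and follows exactly the argument used in the paper: identify the tensor-and-exterior-power space with $\mathbb{R}^{d'}$, push $\mu$ forward under $A\mapsto (A^{\wedge k})^{\otimes(q-p)}\otimes(A^{\wedge(k+1)})^{\otimes p}$, use the norm identity $\|\Phi(A)\|^{1/q}=\varphi^s(A)$ together with multiplicativity to get $\mathbf{M}(\hat\mu,1/q)=\mathbf{P}(\mu,s)$, and invoke Theorem \ref{th:mpressure} at exponent $1/q$. Your additional remark that the constant $K_{d',1/q}=(d')^{2+(d'+1)/q}(d')^{(q-1)/q}$ is bounded by the stated $K$ is also correct (and, incidentally, shows the paper's stated constant is slightly weaker than necessary).
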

\begin{proof}
Similarly to Lemma \ref{le:rational} we identify $\left(\otimes^{q-p}\left(\wedge^k \mathbb{R}^d\right)\right) \otimes \left(\otimes^p \left(\wedge^{k+1}\mathbb{R}^d\right)\right)$ with $\mathbb{R}^{d'}$ and define a measure $\hat\mu$ on  $M_{d'}(\mathbb{R})$ by
\[\hat\mu(X):=\mu\left(\left\{A \in M_{d}(\mathbb{R})\colon (A^{\wedge k})^{\otimes (q-p)}\otimes (A^{\wedge (k+1)})^{\otimes p} \in X\right\}\right) \]
for all Borel measurable sets $X \subseteq M_{d'}(\mathbb{R})$. Since
\[\left\|(A^{\wedge k})^{\otimes (q-p)}\otimes (A^{\wedge (k+1)})^{\otimes p}\right\|^{\frac{1}{q}}\equiv \sigma_1(A)\sigma_2(A)\cdots \sigma_k(A)\sigma_{k+1}(A)^{\frac{p}{q}} \equiv \varphi^s(A)\]
we have $\mathbf{P}(\mu,s)=\mathbf{M}(\hat\mu,1/q)$ and hence by Theorem \ref{th:mpressure}
\[\int \|A\|d\hat\mu_{d'}(A)\leq  K e^{\mathbf{M}(\hat\mu,1/q)}\left(\int \|A\|d\hat\mu(A)\right)^{d'-1}\]
which is precisely \eqref{eq:cutprice}.
\end{proof}
\begin{proof}[Proof of Theorem \ref{th:fcontinuous}(ii)]
Let $0<k<d$ and $\mu \in \mathcal{M}_d$. If $\mathbf{P}(\mu^0,k)=-\infty$ then in view of the upper semi-continuity of $\mathbf{P}$ nothing remains to be proved, so we shall assume that this is not the case. 
Let $d'$, $K$ be as given by Proposition \ref{pr:flight}; since $\mathbf{P}(\mu^0,k)>-\infty$ we have $\int \varphi^k(A)d\mu_{d'}^0(A)>0$. By Theorem \ref{th:fcontinuous}(i) the function $(\nu,s) \mapsto \mathbf{P}(\nu,s)$ is continuous on $\mathcal{M}_d \times (k-1,k]$, so to prove (ii) it is sufficient to prove
\[\liminf_{\substack{\nu \to \mu\\s \to k^+}} \mathbf{P}(\nu,s) \geq \mathbf{P}(\mu^0,k).\]
For each $\varepsilon>0$ let $K_\varepsilon:=\{A \in M_d(\mathbb{R}) \colon \sigma_d(A) \geq \varepsilon\}$, and let $U_\varepsilon:=\{A \in M_d(\mathbb{R}) \colon \sigma_d(A)>\varepsilon\}$. For each $\nu \in \mathcal{M}_d$ and $\varepsilon>0$ let us define two measures $\nu^\varepsilon$, $\overline{\nu}^\varepsilon$ on the closed unit ball of $M_d(\mathbb{R})$ by $\nu^\varepsilon(X):=\nu(X \cap U_\varepsilon)$ and $\overline{\nu}^\varepsilon(X):=\nu(X \cap K_\varepsilon)$ for every Borel measurable subset $X$ of the closed unit ball of $M_d(\mathbb{R})$. For each $\nu \in \mathcal{M}_d$, $\varepsilon>0$ and $s \in [k,k+1)$ we have
\begin{align*}\mathbf{P}(\nu,s) \geq \mathbf{P}(\overline{\nu}^\varepsilon,s)&=\lim_{n \to \infty} \frac{1}{n}\log\left(\int\sigma_1(A)\cdots \sigma_k(A)\sigma_{k+1}(A)^{s-k}d\overline{\nu}^\varepsilon_n(A)\right)\\
&\geq \lim_{n \to \infty} \frac{1}{n}\log\left(\int \sigma_1(A)\cdots \sigma_k(A)\varepsilon^{n(s-k)}d\overline{\nu}^\varepsilon_n(A)\right) \\
&= (s-k)\log \varepsilon+ \mathbf{P}\left(\overline{\nu}^\varepsilon,k\right)\end{align*}
and hence for each $\varepsilon>0$
\[\liminf_{\substack{\nu \to \mu\\ s \to k^+}} \mathbf{P}(\nu,s) \geq \liminf_{\nu \to \mu} \mathbf{P}(\overline{\nu}^\varepsilon ,k).\]
Since $\int \varphi^k(A)d\mu^0_{d'}(A)>0$ we may by the monotone convergence theorem choose $\varepsilon_0>0$ such that for all $\varepsilon \in (0,\varepsilon_0)$ we have $\int \varphi^k(A)d\mu^\varepsilon_{d'}(A)>0$. Since each $U_\varepsilon$ is open, it follows from standard properties of the weak-* topology on $\mathcal{M}_d$ that for all $\varepsilon \in (0,\varepsilon_0)$ we have $\int \varphi^k(A)d\nu^\varepsilon_{d'}(A)>0$ whenever $\nu \in \mathcal{M}_d$ is sufficiently close to $\mu$. In particular, if $0<\varepsilon<\varepsilon_0$ then by Proposition \ref{pr:flight} we have $\mathbf{P}(\nu^\varepsilon,k)>-\infty$ when $\nu$ is sufficiently close to $\mu$.

Let $0<2\varepsilon<\varepsilon_0$ and suppose that $\nu$ is so close to $\mu$ that $\mathbf{P}(\nu^{2\varepsilon},k)>-\infty$. We have $\mathbf{P}(\overline{\nu}^{\varepsilon},k) \geq \mathbf{P}(\nu^{2\varepsilon},k)>-\infty$, so $\int \varphi^k(A)d\overline{\nu}_n^{\varepsilon}(A)>0$ for every $n \geq 1$ and therefore using Proposition \ref{pr:flight}
\begin{align*}\mathbf{P}(\overline{\nu}^{\varepsilon},k) &=\sup_{n \geq 1}\frac{1}{n}\log\left(\frac{\int \varphi^k(A)d\overline{\nu}_{nd'}^{\varepsilon}(A)}{K\left(\int \varphi^k(A)d\overline{\nu}_{n}^{\varepsilon}(A)\right)^{d'-1}}\right)\\
&\geq \sup_{n \geq 1}\frac{1}{n}\log\left(\frac{\int \varphi^k(A)d\nu_{nd'}^{2\varepsilon}(A)}{K\left(\int \varphi^k(A)d\overline{\nu}_{n}^{\varepsilon}(A)\right)^{d'-1}}\right).\end{align*}
Now, since $U_{2\varepsilon}$ is open and $K_\varepsilon$ is closed,
\[\liminf_{\nu \to \mu} \int\varphi^k(A)d\nu^{2\varepsilon}_{nd'}(A) \geq \int \varphi^k(A) d\mu^{2\varepsilon}_{nd'}(A)\]
and
\[\limsup_{\nu \to \mu} \int\varphi^k(A)d\overline{\nu}^{\varepsilon}_{nd'}(A) \leq \int \varphi^k(A) d\overline{\mu}^{\varepsilon}_{nd'}(A)\]
for every $n \geq 1$. Hence,
\begin{align*}\liminf_{\nu \to \mu} \mathbf{P}\left(\overline{\nu}^{\varepsilon},k\right) &\geq \liminf_{\nu \to \mu} \sup_{n \geq 1}\frac{1}{n}\log\left(\frac{\int \varphi^k(A)d\nu_{nd'}^{2\varepsilon}(A)}{K\left(\int \varphi^k(A)d\overline{\nu}_{n}^{\varepsilon}(A)\right)^{d'-1}}\right)\\
& \geq \sup_{n \geq 1}\liminf_{\nu \to \mu}\frac{1}{n}\log\left(\frac{\int \varphi^k(A)d\nu_{nd'}^{2\varepsilon}(A)}{K\left(\int \varphi^k(A)d\overline{\nu}_{n}^{\varepsilon}(A)\right)^{d'-1}}\right)\\
& \geq \sup_{n \geq 1}\frac{1}{n}\log\left(\frac{\int \varphi^k(A)d\mu_{nd'}^{2\varepsilon}(A)}{K\left(\int \varphi^k(A)d\overline{\mu}_{n}^{\varepsilon}(A)\right)^{d'-1}}\right)\\
& \geq \sup_{n \geq 1}\frac{1}{n}\log\left(\frac{\int \varphi^k(A)d\mu_{nd'}^{2\varepsilon}(A)}{K\left(\int \varphi^k(A)d\mu_{n}^{0}(A)\right)^{d'-1}}\right).
\end{align*}
Since by the monotone convergence theorem
\[\lim_{\varepsilon \to 0} \int \varphi^k(A)d\mu_{nd'}^{2\varepsilon}(A)= \int \varphi^k(A)d\mu_{nd'}^0(A)\]
for every $n \geq 1$, we conclude that
\begin{align*}\lim_{\substack{\nu \to \mu\\ s \to k^+}} \mathbf{P}(\nu,s) &\geq \lim_{\varepsilon \to 0}\liminf_{\nu \to \mu} \mathbf{P}(\overline{\nu}^\varepsilon,k)\\
& \geq \sup_{n \geq 1}\frac{1}{n}\log\left(\frac{\int \varphi^k(A)d\mu_{nd'}^{0}(A)}{K\left(\int \varphi^k(A)d\mu_{n}^{0}(A)\right)^{d'-1}}\right)=\mathbf{P}(\mu^0,k)\end{align*}
and this completes the proof.

\end{proof}

%%%%%%%%%%%%%%%%%%%%%%%%%%%%%%%%%%%%%%%%%%%
% NEW SECTION
%%%%%%%%%%%%%%%%%%%%%%%%%%%%%%%%%%%%%%%%%%%

\section{Proof of Theorem \ref{th:cks}}

Whilst Theorem \ref{th:cks} can be deduced from the full strength of Theorem \ref{th:fcontinuous}(i) in a similar manner to \cite{FeSh14}, we will give a proof using Lemma \ref{le:rational} alone. 
For each $A_1,\ldots,A_N \in \mathbf{B}_d$ and $s>0$ let us define
\begin{align*}\mathbf{P}((A_1,\ldots,A_N),s)&:=\lim_{n \to \infty}\frac{1}{n}\log \sum_{i_1,\ldots,i_n=1}^N \varphi^s(A_{i_1}\cdots A_{i_n})\\
&=\mathbf{P}\left(\sum_{i=1}^N\delta_{A_i},s\right)=s\log N+\mathbf{P}\left(\frac{1}{N}\sum_{i=1}^N\delta_{A_i},s\right).\end{align*}
Since the map $\mathbf{B}_d^N \to \mathcal{M}_d$ defined by $(A_1,\ldots,A_N) \mapsto \frac{1}{N}\sum_{i=1}^N\delta_{A_i}$ is continuous, it follows from Lemma \ref{le:rational} that the above expression depends continuously on $(A_1,\ldots,A_N)$ when $s>0$ is rational.

For fixed $A_1,\ldots,A_N$ let $\theta:=\max\|A_i\|\in [0,1)$. Clearly we have
\[e^{\mathbf{P}((A_1,\ldots,A_N),s+t)}\leq\theta^te^{\mathbf{P}((A_1,\ldots,A_N),s)}\]
 for every $s,t>0$. It follows in particular that the function $s \mapsto \mathbf{P}((A_1,\ldots,A_N),s)$ is decreasing and is not constant on any interval on which it takes a finite value.  Using these observations we may characterise $\mathfrak{s}$ by the expression
\begin{align*}\mathfrak{s}(A_1,\ldots,A_N)&=\inf\left\{s>0 \colon \mathbf{P}((A_1,\ldots,A_N),s)<0\right\}\\
&=\sup\left\{s>0 \colon \mathbf{P}((A_1,\ldots,A_N),s)>0\right\}.\end{align*}
To prove the theorem it is sufficient to show that for all $s>0$, the sets $\mathfrak{s}^{-1}([0,s))$ and $\mathfrak{s}^{-1}((s,+\infty))$ are open: but by the above characterisation we may write
\[\mathfrak{s}^{-1}([0,s)) = \bigcup_{t \in (0,s) \cap \mathbb{Q}} \{(A_1,\ldots,A_N) \in \mathbf{B}_d^N \colon \mathbf{P}((A_1,\ldots,A_N),t)<0\},\]
\[\mathfrak{s}^{-1}((s,+\infty)) = \bigcup_{t \in (s,+\infty) \cap \mathbb{Q}} \{(A_1,\ldots,A_N) \in \mathbf{B}_d^N \colon \mathbf{P}((A_1,\ldots,A_N),t)>0\},\]
and since $(A_1,\ldots,A_N)\mapsto \mathbf{P}((A_1,\ldots,A_N),t)$ is continuous for rational $t$, these are open sets.

%%%%%%%%%%%%%%%%%%%%%%%%%%%%%%%%%%%%%%%%%%%
% NEW SECTION
%%%%%%%%%%%%%%%%%%%%%%%%%%%%%%%%%%%%%%%%%%%
\section{Proof of Theorem \ref{th:fcontinuous2}}

By Theorem \ref{th:fcontinuous} we know that $\mathbf{P}$ is continuous at $(\mu,s)$ if $s\neq 1$ or if $s=1$ and $\mathbf{P}(\mu,1)=\mathbf{P}(\mu^0,1)$, so it is sufficient to show that if $\mathbf{P}(\mu,1)>\mathbf{P}(\mu^0,1)$ then $\mathbf{P}$ is discontinuous at $(\mu,1)$. Let us assume this inequality to be satisfied. We observe that $\varphi^t(A)=0$ when $t>1$ and $|\det A|=0$, so for $t>1$ we have $\int \varphi^t(A)d\mu_n(A)=\int \varphi^t(A)d\mu_n^0(A)$ for all $n \geq 1$, and in particular $\mathbf{P}(\mu,t)=\mathbf{P}(\mu^0,t)$ for every $t>1$. Observe also that $\mathbf{P}(\nu,1)=\mathbf{M}(\nu,1)$ for every $\nu \in \mathcal{M}_2$, and that $\mathbf{P}(\nu,t) \leq \mathbf{M}(\nu,t)$ for all $(\nu,t) \in \mathcal{M}_2 \times (0,+\infty)$ as a consequence of the elementary inequality $\varphi^t(A) \leq \|A\|^t$. Assembling these parts we compute that 
\[\limsup_{t \to 1^+} \mathbf{P}(\mu,t) =\limsup_{t \to 1^+} \mathbf{P}(\mu^0,t)\leq \lim_{t \to 1^+} \mathbf{M}(\mu^0,t) =\mathbf{M}(\mu^0,1)=\mathbf{P}(\mu^0,1)<\mathbf{P}(\mu,1) \]
where we have used Theorem \ref{th:mcontinuous}, and we have shown that $\mathbf{P}$ is discontinuous at $(\mu,1)$.

%%%%%%%%%%%%%%%%%%%%%%%%%%%%%%%%%%%%%%%%%%%
% NEW SECTION
%%%%%%%%%%%%%%%%%%%%%%%%%%%%%%%%%%%%%%%%%%%

%This section ok!
\section{Proof of Theorem \ref{th:fpressure}}
When $0 <s\leq 1$ we have $\mathbf{P}(\mu,s)=\mathbf{M}(\mu,s)$ and the result is immediate from Theorem \ref{th:mpressure}; when $s \geq 2$ we simply have
\[e^{\mathbf{P}(\mu,s)}=\lim_{n \to \infty} \left(\int |\det A|^{\frac{s}{2}}d\mu_n(A)\right)^{\frac{1}{n}}=\int |\det A|^{\frac{s}{2}}d\mu(A)\]
by the multiplicativity of the determinant, so the result is trivial. If $1<s<2$ then we define a new measure $\hat\mu$ on $M_2(\mathbb{R})$ by $\int f(A)d\hat\mu(A):=\int f(A)|\det A|^{s-1}d\mu(A)$ for all compactly supported continuous $f \colon M_2(\mathbb{R}) \to \mathbb{R}$. Since for each $n \geq 1$
\begin{align*}\int \varphi^s(A)d\mu_n(A)&=\int \sigma_1(A)\sigma_2(A)^{s-1}d\mu_n(A) \\
&= \int \|A\|^{2-s}|\det A|^{s-1}d\mu_n(A) = \int \|A\|^{2-s}d\hat\mu_n(A)\end{align*}
we have $\mathbf{P}(\mu,s)=\mathbf{M}(\hat\mu,2-s)$, so using Theorem \ref{th:mpressure}
\begin{align*}\int \varphi^s(A)d\mu_2(A)&= \int \|A\|^{2-s}d\hat\mu_2(A)\\
& \leq K_{d,2-s}e^{\mathbf{M}(\mu,2-s)} \int \|A\|^{2-s}d\hat\mu(A)\\
&=2^{7-2s} e^{\mathbf{P}(\mu,s)} \int \varphi^s(A)d\mu(A)\end{align*}
as required, where in the first line we have exploited the multiplicativity of the determinant. The proof is complete.

%%%%%%%%%%%%%%%%%%%%%%%%%%%%%%%%%%%%%%%%%%%
% NEW SECTION
%%%%%%%%%%%%%%%%%%%%%%%%%%%%%%%%%%%%%%%%%%%

%This section ok!
\section{Proof of Theorem \ref{th:limit}}\label{se:last}

For each $s>0$ and $n \geq 1$ we have
\[\left(\int \|A\|^sd\mu_n(A)\right)^{\frac{1}{ns}} \leq \mu(\mathsf{A})^{\frac{1}{s}}\mathrm{ess}\text{ }{\sup}_{\mu_n} \|A\|^{\frac{1}{n}} = \mu(\mathsf{A})^{\frac{1}{s}}\sup_{A_1,\ldots,A_n \in \mathsf{A}}\|A\|^{\frac{1}{n}}\]
since $\mu$ has support equal to $\mathsf{A}$. Taking the limit as $n \to \infty$ yields
\[e^{\mathbf{M}(\mu,s)/s} \leq \mu(\mathsf{A})^{\frac{1}{s}} \varrho_\infty(\mathsf{A})\]
so that 
\[\limsup_{s \to \infty}e^{\mathbf{M}(\mu,s)/s} \leq \varrho_\infty(\mathsf{A}).\]
If $\varrho_\infty(\mathsf{A})=0$ then we are done. Otherwise, it must be the case that for each $n \geq 1$ there exists a nonzero product $A_1\cdots A_n$ with $A_1,\ldots,A_n \in \mathsf{A}$, and since $\mathsf{A}$ is the support of $\mu$ it follows that $\int \|A\|^s d\mu_n(A)>0$ for every $s>0$ and $n \geq 1$. Using Theorem \ref{th:mpressure} we may therefore write
\[e^{\mathbf{M}(\mu,s)/s} \geq \left(\frac{\int \|A\|^s d\mu_{nd}(A)}{K_{d,s}\left(\int \|A\|^s d\mu_{n}(A)\right)^{d-1}}\right)^{\frac{1}{ns}}\]
for every $s>0$ and $n \geq 1$. Taking the limit as $s \to \infty$ yields
\[\liminf_{s \to \infty} e^{\mathbf{M}(\mu,s)/s} \geq \left(\frac{\sup_{B_1,\ldots,B_{nd} \in \mathsf{A}} \|B_1\cdots B_{nd}\|}{d^{d+1}\left(\sup_{A_1,\ldots,A_n \in \mathsf{A}} \|A_1\cdots A_n\|\right)^{d-1}}\right)^{\frac{1}{n}}\]
and the limit $n \to \infty$ yields $\liminf_{s \to \infty}e^{\mathbf{M}(\mu,s)/s} \geq \varrho_\infty(\mathsf{A})$ as required to complete the proof.

\section{acknowledgments}
This research was supported by EPSRC grant  EP/L026953/1.
The author thanks P. Shmerkin, V. Yu. Protasov and R. Jungers for helpful remarks.

\bibliographystyle{siam}
\bibliography{cont}

\end{document}